\renewcommand\AA{\mathbb{A}}
\newcommand\PP{\mathbb{P}}
\newcommand\FF{\mathbb{F}}
\newcommand\FFqbar{{\overline{\FF}_q}}
\newcommand\ZZ{\mathbb{Z}}
\newcommand\QQ{\mathbb{Q}}
\newcommand\OO{\mathcal{O}}
\newcommand\RR{\mathbb{R}}
\newcommand\CC{\mathbb{C}}
\newcommand\R{\mathfrak{R}}
\newcommand\xx{\mathbf{x}}
\newcommand\Kbar{{\overline{K}}}
\newcommand\Pone{\PP^1}
\newcommand\Ptwo{\PP^2}
\newcommand\Pthree{\PP^3}
\newcommand\rto{\dashrightarrow}
\newcommand\lin[2]{{\langle{#1},{#2}\rangle}}
\DeclareMathOperator\Hom{Hom}
\DeclareMathOperator\Gal{Gal}
\DeclareMathOperator\id{id}
\newtheorem{theorem}{Theorem}
\newtheorem{lemma}[theorem]{Lemma}
\newtheorem{corollary}[theorem]{Corollary}
\newtheorem{proposition}[theorem]{Proposition}
\theoremstyle{definition}
\newtheorem{definition}[theorem]{Definition}
\newtheorem{conjecture}[theorem]{Conjecture}
\newtheorem{remark}[theorem]{Remark}
\newtheorem{example}[theorem]{Example}
\newtheorem{question}[theorem]{Question}
\newtheorem{exercise}[theorem]{Exercise}
\newtheorem{say}[theorem]{}
\numberwithin{theorem}{section}
\newcommand{\spec}[0]{\operatorname{Spec}}
\newcommand{\pic}[0]{\operatorname{Pic}}
\newcommand{\mult}[0]{\operatorname{mult}}
\def\into{\DOTSB\lhook\joinrel\to}
\newcommand{\smooth}[0]{\operatorname{Smoothing}}
\newcommand{\comb}[0]{\operatorname{Comb}}
\begin{document}

\title[Rational curves on 
cubic surfaces]{Looking for rational curves on \\
cubic hypersurfaces}

\author[J\'anos Koll\'ar]{J\'anos Koll\'ar\\
\\
notes  by  Ulrich Derenthal}

\date{G\"ottingen, 25--29 June 2007}

\maketitle
\today


\section{Introduction}

The aim of these lectures is to study
 rational points and rational curves on varieties, mainly
 over finite fields $\FF_q$. We concentrate on 
hypersurfaces  $X^n$ of degree $\leq n+1$ in  $\PP^{n+1}$,
especially on 
cubic hypersurfaces.

The theorem of Chevalley--Warning (cf.\ Esnault's
  lectures) guarantees  rational points on low degree
 hypersurfaces over finite fields. That is, if
   $X \subset \PP^{n+1}$ is a hypersurface of degree $ \le n+1$, then
  $X(\FF_q) \ne \emptyset$.

In particular, every cubic hypersurface of dimension $\geq 2$
defined over a finite field contains a
 rational point, but we would like to say more.
\begin{enumerate}
\item[$\bullet$]  Which cubic 
hypersurfaces contain more than one rational point? 
\item[$\bullet$]
Which cubic
hypersurfaces contain rational curves?
\item[$\bullet$]
Which cubic
hypersurfaces contain many rational curves?
\end{enumerate}

  Note that there
can be rational curves on $X$  even if $X$ has a unique $\FF_q$-point.
Indeed, $f:\Pone \to X$ 
 could map all $q+1$ points of $\Pone(\FF_q)$ to the same point
in $X(\FF_q)$, even if $f$ is not constant.

So what does it mean for a variety to 
contain many rational curves?
As an example, let us look at $\CC\PP^2$.
We know that through any 2 points there is a line, through any
5 points there is a conic, and so on. 
So we might say that a variety $X_K$ 
contains many rational curves if through any number of
points $p_1,\dots, p_n\in X(K)$ there is a rational curve
defined over $K$. 

However, we are in trouble over finite fields.
A smooth rational curve over $\FF_q$ has only $q+1$ points, so it can
never pass through more than $q+1$ points in $X(\FF_q)$. 
Thus, for cubic hypersurfaces, the following result,
proved in  Section \ref{desc.sect},  appears to  be optimal:

\begin{theorem} \label{cubic.ext.thm}
Let $X\subset \PP^{n+1}$ be a smooth cubic hypersurface
over $\FF_q$. Assume that $n\geq 2$ and $q\geq 8$.
Then every map of sets $\phi:\Pone(\FF_q)\to X(\FF_q)$ can be extended
to a map of $\FF_q$-varieties  $\Phi: \Pone \to X$.
\end{theorem}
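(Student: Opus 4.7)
The plan is to construct $\Phi$ by a \emph{comb-smoothing} argument: build a reducible genus-zero curve in $X$ that already carries the prescribed data on the $q+1$ marked points, then smooth it to an irreducible $\Pone$, taking care that both the comb and its smoothing are defined over $\FF_q$.

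\emph{Step 1 (Teeth).} For each $t\in\Pone(\FF_q)$, set $p_t=\phi(t)\in X(\FF_q)$, and choose an $\FF_q$-rational \emph{free} rational curve $C_t\subset X$ through $p_t$. On a smooth cubic hypersurface of dimension $\geq 2$ there is a positive-dimensional family of free rational curves through every point (free conics always, and even free lines for $n\geq 3$, obtained by intersecting $X$ with linear subspaces through $p_t$). One must check that this family contains a member defined over $\FF_q$, which uses standard point counting on the variety of such curves once $q$ is large enough.

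\emph{Step 2 (Comb).} Form the reducible nodal curve $\bar C$ with a backbone $B\cong\Pone$ and the tooth $C_t$ attached to $B$ at $t\in B(\FF_q)$ for every $t\in\Pone(\FF_q)$. Map $\bar C\to X$ by sending $B$ via a convenient $\FF_q$-morphism (for instance a constant map to a single point, or a fixed low-degree rational curve chosen to pass through the nodes after adjustment) and each $C_t$ via its given embedding, so that the node $t$ is sent to $p_t$. The resulting stable map realizes the prescribed data but is reducible.

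\emph{Step 3 (Smoothing).} Apply the deformation theory of stable maps: because each tooth $C_t$ is free, attaching $q+1$ teeth makes the normal sheaf of $\bar C$ in $X$ sufficiently positive that the relevant $H^1$, computed with the $q+1$ marked points held fixed, vanishes. The comb then smoothes inside $X$ to a family of irreducible rational curves, and every such smoothing is a morphism $\Phi:\Pone\to X$ with $\Phi(t)=p_t$ for each $t\in\Pone(\FF_q)$.

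\emph{Step 4 (Descent to $\FF_q$).} The space $\mathcal S$ of these smoothings is naturally defined over $\FF_q$, and the dimension estimate in Step 3 makes it smooth and geometrically rationally connected of positive dimension. A Lang--Weil or Chevalley--Warning count then yields $\mathcal S(\FF_q)\neq\emptyset$, producing the desired $\Phi$ over $\FF_q$. The explicit numerical bound $q\geq 8$ is expected to emerge here, as it must be large enough both to supply $\FF_q$-rational teeth in Step 1 and to guarantee an $\FF_q$-point of $\mathcal S$. The main obstacle I foresee is Step 3: one must arrange that the comb carries enough normal-bundle positivity to smooth while rigidly fixing the $q+1$ node values, and this positivity requirement is precisely what dictates how many teeth (hence how large $q$) are needed.
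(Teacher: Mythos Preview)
Your Step~2 is not coherent as written: if the backbone $B$ is mapped constantly to a point then every node has the same image, contradicting ``the node $t$ is sent to $p_t$'', while a nonconstant backbone map with $t\mapsto p_t$ would already be the sought-for $\Phi$. The usual repair is to place the marked point $m_t$ on the tooth $C_t$ at a smooth preimage of $p_t$ and let the node map elsewhere; but then smoothing in $\overline{M}_{0,q+1}(X,\,m_t\mapsto p_t)$ produces $\Phi:\Pone\to X$ with $\Phi(q_t)=p_t$ for some $(q{+}1)$-tuple of distinct $\FF_q$-points $q_t$, and nothing forces the bijection $t\mapsto q_t$ to lie in $\mathrm{PGL}_2(\FF_q)$. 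Since that group is only $3$-transitive on $\Pone(\FF_q)$, you generally cannot reparametrize to achieve $q_t=t$: you have found a rational curve meeting the target points, not an extension of the given map of sets.

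Even if Steps~1--3 were repaired, Step~4 cannot yield $q\ge 8$. Both Step~1 (an $\FF_q$-rational free curve through each $p_t$) and Step~4 (an $\FF_q$-point of $\mathcal S$) rest on Lang--Weil, whose threshold depends on the embedding dimension and defining degrees of the space in question; here these grow with the number $q+1$ of constraints, so the required inequality takes the shape $q>C(q)$ and gives no uniform small bound. The paper's comb/Lang--Weil machinery (Sections~\ref{combs.sec}--\ref{2pt.sec}) indeed establishes the statement only for $q\gg 1$. The bound $q\ge 8$ comes from a cubic-specific descent (Section~\ref{desc.sect}): for each $t$ choose an $\FF_q$-line through $\phi(t)$ whose two residual intersections with $X$ are conjugate $\FF_{q^2}$-points (Lemma~\ref{lem:cubic_conjugate_points} supplies such a line once $q\ge 8$), lift $\phi$ to $\phi_2:\Pone(\FF_q)\to X(\FF_{q^2})$, extend $\phi_2$ over $\FF_{q^2}$ by iterating until the field is large enough for the Lang--Weil argument (the number of interpolation points stays $q+1$ while the field grows), and then descend back to $\FF_q$ via the third intersection point map on the Weil restriction. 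The constant $8$ is an elementary line count, not a moduli-space estimate.
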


In fact, one could think of stronger versions as well.
A good way to formulate what it means for $X$
to contain many (rational and nonrational)  curves is the following:

\begin{conjecture}\cite{k-sz} \label{koll.conj}
 $X\subset \PP^{n+1}$ be a smooth 
hypersurface of degree $\leq n+1$ defined over a finite field $\FF_q$.
 Let $C$ be a smooth projective curve and $Z \subset C$ a zero-dimensional
  subscheme. Then  any morphism $\phi: Z \to X$ can be extended to $C$.
That is,  there is a morphism
  $\Phi:C \to X$ such that $\Phi|_{Z}=\phi$.
\end{conjecture}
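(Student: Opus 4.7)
The plan is to exhibit the extension $\Phi$ as an $\FF_q$-rational point of the Hom-scheme
\[
M \;=\; \mathrm{Hom}_\phi(C, X) \;=\; \{\, \Phi : C \to X \; : \; \Phi|_Z = \phi \,\},
\]
produced by combining a comb-and-smoothing construction with Esnault's theorem on rational points of rationally connected varieties over $\FF_q$. The key geometric input is that a smooth hypersurface of degree $\le n+1$ in $\PP^{n+1}$ is Fano and (in suitable characteristic) separably rationally connected; hence through every geometric point of $X$ there passes a \emph{very free} rational curve $f : \Pone \to X$ with $f^* T_X$ ample, and any finite collection of prescribed points can be interpolated by chains of such curves.

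\emph{Step 1: geometric non-emptiness of $M$.} Fix an auxiliary $x_0 \in X(\FF_q)$ by Chevalley--Warning. For each closed point $z \in Z$ choose, over the residue field $k(z)$, a very free rational tooth $T_z \subset X$ passing through both $\phi(z)$ and $x_0$; glue these teeth to $C$ at the points of $Z$ to form a comb $\tilde C = C \cup \bigcup_z T_z$, together with a morphism $\tilde C \to X$ that is constant equal to $x_0$ on the handle and realizes $\phi$ on $Z$. The smoothing theorem for combs with very free teeth deforms $\tilde C$ to a smooth curve $C'$ with an extension of the morphism to $X$; attaching sufficiently many rigidifying teeth allows one to steer the smoothing so that $C' \simeq C$, giving $M(\FFqbar) \ne \emptyset$.

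\emph{Step 2: descent to $\FF_q$.} To produce an $\FF_q$-point of $M$, I would show that a distinguished irreducible component of a stable-map compactification $\overline M \supset M$ is itself separably rationally connected over $\FF_q$: deformation theory around a free morphism $C \to X$ produces many free rational curves in $\overline M$, obtained essentially by propagating the very free curves on $X$ through the universal family, and a bend-and-break argument then gives the requisite chain-connectedness. Esnault's theorem yields $\overline M(\FF_q) \ne \emptyset$, and a check ensures this point lies in the open locus $M$ of genuine morphisms rather than in the boundary of degenerate stable maps.

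\emph{Main obstacle.} The crux is carrying out Step~1 $\Gal(\FFqbar/\FF_q)$-equivariantly. Generic comb smoothing moves the handle in $\overline{\mathcal M}_g$, so forcing the smoothing to land precisely at the moduli point $[C]$ demands a sufficient supply of auxiliary rational curves through the prescribed images $\phi(z)$, all compatible with the Galois action when $Z$ has points of large residue-field degree. This is precisely the source of the hypothesis $q \ge 8$ in the prototype Theorem~\ref{cubic.ext.thm}, and why the full conjecture is delicate for small $q$ and for $Z$ with large residue extensions: no uniform Galois-equivariant smoothing construction is presently available in that regime.
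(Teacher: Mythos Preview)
The statement you are attempting to prove is a \emph{conjecture}: the paper does not prove it in full generality, only the two special cases recorded in Theorem~1.3 (arbitrary $X$ for $q$ sufficiently large, and cubics for $q\ge 8$ with $Z$ supported on odd-degree points). So there is no ``paper's own proof'' to compare against, and any complete argument would be a new result.

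Your proposal has genuine gaps beyond the obstacle you yourself flag. In Step~1, the assertion that ``attaching sufficiently many rigidifying teeth allows one to steer the smoothing so that $C'\simeq C$'' is not a known phenomenon: smoothing a comb with handle $C$ deforms the handle nontrivially in moduli, and there is no mechanism for forcing the smoothed curve to return to the isomorphism class of $C$. This is exactly why the paper, even in the cases it does treat, never argues this way; instead it produces a smooth $K$-point directly on a component of $\overline{M}_{0,m}(X,p_i\mapsto x_i)$ (for $C=\Pone$) or, for higher-genus $C$, uses the descent trick of Section~\ref{desc.sect} to pull back from $\FF_{q^2}$ via the third intersection point map, bypassing any smoothing of $C$ itself.

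Step~2 is also problematic. That a component of the stable-map compactification $\overline M$ is separably rationally connected is not established, and the sketch you give (``propagating very free curves on $X$ through the universal family'') does not obviously produce free rational curves on $\overline M$. Even granting SRC, Esnault's theorem yields only $\overline M(\FF_q)\ne\emptyset$; the promised ``check'' that this point avoids the boundary is the entire difficulty, since the boundary is typically of codimension one and there is no a priori reason an $\FF_q$-point should miss it. The paper's method for the large-$q$ case avoids this issue by using Lang--Weil directly on a geometrically irreducible open piece of $\Hom(\Pone,X)$, which works precisely because $q$ is large.
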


More generally, this should hold for any separably rationally connected
variety $X$,
see \cite{k-sz}. We define this notion in
Section \ref{sep-rat.sec}.

The aim of these notes is to explore these and related questions, 
especially for
cubic hypersurfaces. The emphasis will be on presenting a variety of
methods, and we end up outlining the proof of
 two special  cases of the Conjecture.

\begin{theorem} Conjecture (\ref{koll.conj}) holds in the following two cases.
\begin{enumerate}
\item  \cite{k-sz} For arbitrary $X$, when $q$ is sufficiently large
(depending on $\dim X, g(C)$ and $\deg Z$),  and
\item for cubic hypersurfaces when  $q\geq 8$
and $Z$ contains only odd degree points.
\end{enumerate}
\end{theorem}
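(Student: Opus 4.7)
The plan is to treat the two parts by quite different methods: part (1) is a general Lang--Weil argument on the moduli of rational curves on separably rationally connected varieties, while part (2) exploits the chord--tangent geometry of cubics together with a parity/descent argument on the points of $Z$.

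For part (1), starting from $\phi: Z \to X$, I would first find, over $\Kbar$, an extension $f_0: C \to X$ of $\phi$ (which is possible because $X$ is separably rationally connected and $Z$ is zero-dimensional) and then attach to $f_0$ a comb of very free rational curves on $X$, gluing teeth at suitable points of $C$. Since $X$ is SRC, this comb has unobstructed deformations, and after adding enough teeth the moduli space of smoothings that preserve the prescribed values on $Z$ becomes smooth and geometrically irreducible. For a Galois-stable choice of the set of attachment points on $C$, the entire construction descends to $\FF_q$. Lang--Weil then produces an $\FF_q$-point of this smoothing space whenever $q$ is larger than a bound depending on $\dim X$, $g(C)$ and $\deg Z$, and that $\FF_q$-point encodes the desired $\Phi: C \to X$ defined over $\FF_q$.

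For part (2), the cubic case, the decisive geometric input is that a line meets $X$ in three points, so any two rational points on $X$ determine a third. Chord--tangent bookkeeping shows that one can replace an odd-degree point by lower-degree data in a way compatible with any given $\phi: Z \to X$. My plan is to use this to descend odd-degree point by odd-degree point until one lands in a situation where all marked points are $\FF_q$-rational, at which point the extension already exists by Theorem \ref{cubic.ext.thm} (applied either via an auxiliary map $C \to \Pone$, or by adapting the proof of that theorem to higher-genus $C$). To run this in families, the secant construction must be parameterised over $C$: given a partial extension of the desired $\Phi$, a secant through two nearby points on the evolving curve cuts out a third, and these vary in families inside $C \times X$. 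The bound $q \geq 8$ is the price one pays to keep enough $\FF_q$-points in the auxiliary parameter spaces at every step.

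The main obstacle for part (1) is proving geometric irreducibility of the smoothing space and arranging all data to descend to $\FF_q$ simultaneously; both require a careful mix of SRC deformation theory and a Galois-equivariant use of symmetric products of $C$. For part (2) the main difficulty is the descent bookkeeping: one must check that the odd-degree hypothesis is genuinely what powers the chord--tangent process, that the process terminates, and that the curves produced at each step actually glue into a single morphism $\Phi: C \to X$ restricting to $\phi$ on $Z$. Since the even-degree case carries genuine arithmetic obstructions (classes in a Brauer-type group built from quadratic extensions), this parity-sensitive bookkeeping is necessarily the heart of the argument.
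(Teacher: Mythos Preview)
Your plan for part (1) is in the right spirit and close to what the paper outlines (via \cite{k-sz} and Sections \ref{combs.sec}--\ref{2pt.sec}): combs of free curves, deformation theory for smoothness, and Lang--Weil for $\FF_q$-points. You correctly flag geometric irreducibility of the smoothing space as the crux, but you do not say how to obtain it. The paper's mechanism is the \emph{Lefschetz condition} (Definition \ref{leff.cond.defn} and Theorem \ref{2pt.curve.leff.thm}): one produces a family of free rational curves whose $2$-pointed evaluation has geometrically irreducible generic fibre, and it is this, not merely ``adding enough teeth'', that yields a geometrically irreducible component on which Lang--Weil can bite. Smoothness alone does not give irreducibility.

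For part (2) your proposal diverges from the paper and, as written, has a real gap. You propose to use the chord map to ``replace an odd-degree point by lower-degree data'' and iterate until all marked points are $\FF_q$-rational, then invoke Theorem \ref{cubic.ext.thm}. But the points of $Z$ are fixed closed points of the curve $C$; nothing you do on $X$ changes their residue fields, so there is no such reduction. The paper's argument (Section \ref{desc.sect}) is \emph{field descent} from $\FF_{q^2}$ to $\FF_q$, not degree reduction on $Z$. For each $p$ one chooses a line $L_p$ through $\phi(p)$ meeting $X$ in two further conjugate points $s(p),s'(p)$ over the quadratic extension; Lemma \ref{lem:cubic_conjugate_points} supplies such a line for $q\geq 8$. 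This lifts $\phi$ to a map $\phi_2$ into $X$ over $\FF_{q^2}$, and one solves the extension problem for $\phi_2$ over $\FF_{q^2}$ (iterating, this terminates in part (1) over a large enough field), obtaining $\Phi_2:C\to X$. The third-intersection-point map on the Weil restriction, composed with the diagonal $C\hookrightarrow \R_{\FF_{q^2}/\FF_q}C$, then descends $\Phi_2$ to the desired $\FF_q$-map $\Phi:C\to X$. The odd-degree hypothesis is precisely what makes the quadratic lift coherent at each $p$: a point of odd degree $d$ over $\FF_q$ still has degree $d$ over $\FF_{q^2}$, so its quadratic extension is $\FF_{q^{2d}}=\FF_{q^d}\cdot\FF_{q^2}$ and the lifted datum is defined over $\FF_{q^2}$; for even $d$ this compatibility fails. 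Your ``parity-sensitive bookkeeping'' intuition is pointing at the right phenomenon, but the actual mechanism is descent along the tower $\FF_q\subset\FF_{q^2}\subset\FF_{q^4}\subset\cdots$, not a reduction of $Z$.
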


As a warm-up, let us prove the case when 
$X= \PP^n$. This  is essentially due to Lagrange.
The case of quadrics is already quite a bit harder,
see (\ref{conj.bir.inv}).

\begin{example}[Polynomial interpolation]
  Over $\FF_q$, let $C$ be a smooth projective curve, $Z\subset C$
  a  zero-dimensional
  subscheme and $\phi: Z \to  \PP^n$
   a given map.

Fix a line bundle $L$ on $C$ such that
 $\deg L\geq |Z|+2g(C)-1$ and choose an isomorphism
${\mathcal O}_Z\cong L|_Z$.  Then $\phi$ can be given by
$n+1$ sections  $\phi_i\in H^0(Z, L|_Z)$.
From the exact sequence
$$
0\to L(-Z)\to L\to L|_Z\to 0
$$
we see that $H^0(C,L)\twoheadrightarrow H^0(Z, L|_Z)$.
Thus each $\phi_i$ lifts to
$\Phi_i\in H^0(C, L)$
giving the required extension $\Phi: C\to {\mathbb P}^n$.
\end{example}

\begin{say}[The plan of the lectures] 

In Section \ref{1pt.sec}, we study hypersurfaces with a unique point.
This is mostly for entertainment, though special examples
are frequently useful.

Then we prove that a smooth cubic hypersurface containing
a $K$-point is unirational over $K$. That is, there is a
dominant map $g:\PP^n\rto X$. This of course gives plenty of rational curves
on $X$ as  images of   rational curves
on $\PP^n$. Note however, that in general, 
$g:\PP^n(K)\rto X(K)$ is not onto.
(In fact, one expects the image to be very small, see \cite[Sec.VI.6]{manin}.)
Thus unirationality does not guarantee that there is a
rational curve through every $K$-point.

As a generalization of unirationality, the notion of
separably rationally connected varieties is
introduced in Section \ref{sep-rat.sec}.
This is the right class to study the existence of many rational curves.
Spaces parametrizing all rational curves on a  variety are
constructed in Section \ref{rcpar.sec} and their deformation theory
is studied in Section \ref{combs.sec}.

 The easy case of Conjecture \ref{koll.conj}
is
when $Z$ is a single $K$-point. Here a complete answer 
to the analogous question
is known
over $\RR$ or $\QQ_p$. Over $\FF_q$, the Lang-Weil estimates
give a positive answer for $q$ large enough; this is
reviewed in Section \ref{lw.sec}.

The first really hard case of (\ref{koll.conj}) is when
   $C = \Pone$ and $Z= \{0, \infty\}$.  The geometric question is: given $X$
  with $p,p' \in X(\FF_q)$, is there a rational curve defined over $\FF_q$
  passing through $p,p'$? We see in Section \ref{2pt.sec}
 that this is much harder
than the 1-point case since it is related to Lefschetz-type results on
the fundamental groups of open subvarieties.
We use this connection to settle the case 
  for $q$ large enough and $p,p'$ in general position.

Finally, in Section \ref{desc.sect} we use the previous result and the
``third intersection point map'' to prove Theorem \ref{cubic.ext.thm}.
\end{say}

\begin{remark}
The first indication that the 2-point case of (\ref{koll.conj})
is harder than the 1-point case is the different
behavior over
 $\RR$. Consider 
the cubic surface $S$ defined by the affine equation $y^2+z^2=x^3-x$.
Then $S(\RR)$ 
  has two components (a compact and an infinite part).
  \begin{itemize}
  \item If $p,p'$ lie in different components, there is  no rational
    curve over $\RR$ through $p,p'$, since $\RR\Pone$ is connected.
  \item If $p,p'$ lie in the same component, there is no topological
    obstruction. In fact, in this case an $\RR$-rational curve through $p,p'$
    always exists, see \cite[1.7]{k-loc}.
  \end{itemize}
\end{remark}

\section{Hypersurfaces with a unique point}\label{1pt.sec}

The first question has been answered  by Swinnerton-Dyer.
We state it in a seemingly
much sharpened form.

\begin{proposition}\label{prop:one_point}
Let $X$ be a  smooth 
 cubic hypersurface of dimension $\geq 2$
 defined over a  field 
$K$ with a unique $K$-point.
Then $\dim X=2$, $K=\FF_2$  and $X$ is unique up to
projective equivalence.
\end{proposition}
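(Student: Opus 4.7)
The plan is to analyze the intersection of $X$ with its tangent hyperplane at $p$ together with the equation of $X$ on the complementary affine chart, then combine this with cohomological bounds on $|X(\FF_q)|$ for a smooth cubic surface.

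Choose coordinates so that $p = [1:0:\cdots:0]$ and $T_p X = \{x_{n+1} = 0\}$; the equation of $X$ then has the form $F = x_0^2 x_{n+1} + x_0 Q + C$ for a quadratic $Q$ and a cubic $C$ in $x_1, \ldots, x_{n+1}$. Set $Q_0 = Q(x_1, \ldots, x_n, 0)$ and $C_0 = C(x_1, \ldots, x_n, 0)$. Inside $T_p X \cong \PP^n$ (with coordinates $x_0, x_1, \ldots, x_n$), the cubic hypersurface $X \cap T_p X$ is cut out by $x_0 Q_0 + C_0 = 0$ and is singular at $p$. If $Q_0 \not\equiv 0$, this singularity has multiplicity $2$, so projection from $p$ gives a birational map $X \cap T_p X \dashrightarrow \PP^{n-1}$ producing more than one $K$-point of $X \cap T_p X$; since $(X \cap T_p X)(K) \subset X(K) = \{p\}$, we must have $Q_0 \equiv 0$, i.e., $Q$ is divisible by $x_{n+1}$. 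In this case $X \cap T_p X$ is the cone with vertex $p$ over $\{C_0 = 0\} \subset \PP^{n-1}$, and the unique-point condition becomes $\{C_0 = 0\}(K) = \emptyset$.

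If $K$ is infinite, a smooth cubic hypersurface with a $K$-point is $K$-unirational (Segre's theorem, a case of a statement proved later in these notes), so $X(K)$ is Zariski dense and hence infinite, contradicting the hypothesis. Therefore $K = \FF_q$ is finite. If $n \geq 4$, then $\{C_0 = 0\}$ is a cubic hypersurface in $\PP^{n-1}$ of dimension $\geq 2$ and Chevalley--Warning yields $|\{C_0 = 0\}(\FF_q)| \equiv 1 \pmod p$, which is positive; if $n = 3$, then $\{C_0 = 0\}$ is a cubic curve in $\PP^2$, which always has an $\FF_q$-point (by Hasse--Weil in the smooth case and by direct inspection in the singular case). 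Either outcome contradicts $\{C_0 = 0\}(\FF_q) = \emptyset$, so $n = 2$ and $C_0$ is an irreducible cubic form in two variables over $\FF_q$.

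To pin down $q$, write $|X(\FF_q)| = q^2 + tq + 1$ where $t$ is the trace of Frobenius on the $7$-dimensional Picard lattice of $X_{\FFqbar}$. Since Frobenius acts through the Weyl group $W(E_6)$, one has $|t| \leq 7$; the hypothesis $|X(\FF_q)| = 1$ forces $t = -q$, so $q \leq 7$. To eliminate the residual cases $q \in \{3, 4, 5, 7\}$, I would use the constraint from the affine chart $x_3 = 1$: for each $(x_1, x_2) \in \FF_q^2$, the quadratic $x_0^2 + Q(x_1, x_2, 1) x_0 + C(x_1, x_2, 1) = 0$ in $x_0$ must have no $\FF_q$-root. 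For odd $q \geq 3$ this forces the discriminant $Q(x_1, x_2, 1)^2 - 4 C(x_1, x_2, 1)$, a polynomial of degree $\leq 4$ on $\AA^2$, to take only non-square values at every $\FF_q$-point of $\AA^2$, which by a Weil-type character sum estimate is impossible; for $q = 4$, the analogous Artin--Schreier condition fails similarly. Eliminating these small intermediate $q$'s is the main technical obstacle. For $q = 2$ and $n = 2$, since $x_0^2 + x_0 + 1$ is the unique monic irreducible quadratic over $\FF_2$, the affine condition forces $Q(x_1, x_2, 1) = C(x_1, x_2, 1) = 1$ for all $(x_1, x_2) \in \FF_2^2$, and combined with $x_3 \mid Q$ and the irreducibility of $C_0$, after normalizing via a linear change of variables fixing $p$ and $T_p X$, the equation $F$ is determined uniquely up to $\operatorname{PGL}_4(\FF_2)$-equivalence; smoothness of the resulting surface is then verified directly by computing partial derivatives in characteristic $2$.
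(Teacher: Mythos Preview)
Your argument has a genuine gap at the crucial step of eliminating $q\in\{3,4,5,7\}$. You yourself flag this as ``the main technical obstacle,'' and indeed the proposed remedy does not work as stated: Weil-type character-sum bounds are asymptotic in $q$ and carry constants depending on the degree and the geometry of the discriminant locus; they do not automatically apply at $q=3,5,7$. For $q=4$ you only say ``the analogous Artin--Schreier condition fails similarly,'' which is not an argument. (It is true that for $q=3$ one can push your discriminant analysis through by hand: since $x_3\mid Q$, the polynomial $D=Q(x_1,x_2,1)^2-4C(x_1,x_2,1)$ has degree $\le 3$, so $D\equiv 2$ on $\FF_3^2$ forces the cubic part $-4C_0$ to equal $\alpha x_1^3+\beta x_2^3$, which in characteristic $3$ is a perfect cube of a linear form, contradicting the irreducibility of $C_0$. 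But this kind of case-work must actually be carried out for each remaining $q$; a blanket appeal to Weil is not enough.)

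The paper sidesteps this entirely with a short counting argument you may find instructive. Rather than bounding the trace on $\pic(S)$, one counts the $q^3$ hyperplanes $H\subset\PP^3$ over $\FF_q$ that avoid the unique point $p$. Each section $S\cap H$ is a plane cubic with no $\FF_q$-point; the only way this can happen is if $S\cap H$ splits as three Galois-conjugate lines (an irreducible plane cubic, or a line plus a conic, always has an $\FF_q$-point). Since a Galois-conjugate triple of lines spans a unique $\FF_q$-hyperplane, distinct $H$ give distinct triples, yielding $3q^3$ distinct lines on $S$. But a smooth cubic surface carries exactly $27$ lines, so $3q^3\le 27$ forces $q=2$ in one stroke. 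This replaces your cohomological bound plus the unfinished small-$q$ elimination with a single elementary inequality.

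If you want to salvage your trace approach, note that your inequality $|t|\le 7$ is cruder than necessary: Frobenius fixes the canonical class, so $t=1+t_6$ with $t_6$ the trace on the $E_6$ reflection representation, and one can consult the Swinnerton-Dyer/Manin table of the $25$ conjugacy classes in $W(E_6)$ to read off the actual minimum of $t$. That sharper bound, rather than a character-sum estimate, is the natural way to close the gap along the lines you have in mind.
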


\begin{proof} 
We show in the next section that $X$ is unirational.
Hence, if $K$ is infinite, then $X$ has infinitely many $K$-points.
So this is really a question about finite fields.

If $\dim X\geq 3$ then  $|X(K)|\geq q+1$ by (\ref{lower.deg.CW}). Let us
   show next that there is no such surface over $\FF_q$ for $q \ge 3$.

 Assume to the contrary that $S$ contains exactly one rational point $x \in
  S(\FF_q)$. There are $q^3$ hyperplanes in $\Pthree$ over $\FF_q$ not passing
  through $x$. 

  The intersection of each hyperplane with $S$ is a curve of degree 3, which
  is either an irreducible cubic curve, or the union of a line and a conic, or
  the union of three lines. In the first case, the cubic curve contains a
  rational point (if $C$ contains a singular point, this points is defined
  over $\FF_q$; if $C$ is smooth, the Weil conjectures show that $|\#C(\FF_q)
  - (q+1)| \le 2\sqrt q$, so $\#C(\FF_q) = 0$ is impossible); in the second
  case, the line is rational and therefore contains rational points; in the
  third case, at least one line is rational unless all three lines are
  conjugate.

  By assumption,  for each hyperplane $H$ not passing through $x$,
the intersection  $S\cap H$
 does not contain rational points. Therefore, $S\cap H$ must be a union of
  three lines that are not defined over $\FF_q$, but conjugate over $\FFqbar$.

  This gives $3q^3$ lines on $S$. However, over an algebraically closed field,
  a cubic surface contains exactly 27 lines. For $q \ge 3$, we have arrived at
  a contradiction.

  Finally we construct the surface over $\FF_2$, without showing uniqueness.
That needs  a little more case analysis, see \cite[1.39]{ksc}.

  We may assume that the rational point is the origin of
  an affine space on which $S$ is given by the equation
  \[f(x,y,z) = z+Q(x,y,z)+C(x,y,z),\] with $Q$ (resp. $C$) homogeneous of
  degree 2 (resp. 3). If $C$ vanishes in $(x,y,z)$, then $S$ has a rational
  point $(x:y:z)$ on the hyperplane $\Ptwo$ at infinity. Since $C$ must not
  vanish in $(1,0,0)$, the cubic form $C$ must contain the term $x^3$, and
  similarly $y^3$, $z^3$. By considering $(1,1,0)$, we see that it must also
  contain $x^2y$ or $xy^2$, so without loss of generality, we may assume that
  it contains $x^2y$, and for similar reasons, we add the terms $y^2z$,
  $z^2x$. To ensure that $C$ does not vanish at $(1,1,1)$, we add the term
  $xyz$, giving
  \[C(x,y,z)=x^3+y^3+z^3+x^2y+y^2z+z^2x+xyz.\]
  Outside the hyperplane at infinity, we distinguish two cases:
  We see by considering a tangent plane $(z=0)$ that it  must intersect
    $S$ in  three conjugate
    lines. We conclude that $Q(x,y,z)=z(ax+by+cz)$
    for certain $a,b,c \in \FF_2$.
   For $z \ne 0$, we must ensure that $f$ does not vanish at the four
    points $(0,0,1)$, $(1,0,1)$, $(0,1,1)$, $(1,1,1)$, resulting in certain
    restrictions for $a,b,c$. These are satisfied by  $a=b=0$ and $c=1$.
  Therefore, $f(x,y,z) = z+z^2+C(x,y,z)$ with $C$ as above defines a cubic
  surface over $\FF_2$ that has exactly one $\FF_2$-rational point.

There are various ways to check that the cubic is smooth.
The direct computations are messy by hand but easy on a computer.
Alternatively, one can note that $S$ does contain 
$3\cdot 2^3+3=27$ lines
and singular cubics always have fewer than 27.
\end{proof}

\begin{remark}
  A variation of the  argument in the proof shows, without using the
  theorem of Chevalley--Warning, that a cubic surface $S$ defined over $\FF_q$
   must contain at least one rational point: 

  If $S$ does not contain a rational point, the intersection of $S$
  with any of the $q^3+q^2+q+1$ hyperplanes in $\Pthree$ consists of three
  conjugate lines, giving  $3(q^3+q^2+q+1) \ge 45>27$ lines,
a contradiction.
\end{remark}

\begin{exercise}\label{lower.deg.CW}  Using Chevalley--Warning, 
 show that for a hypersurface $X \subset \PP^{n+1}$ of degree 
$n+1-r$, the number of
  $\FF_q$-rational points is at least $|\PP^r(\FF_q)|=q^r+\cdots+q+1$.
\end{exercise}

\begin{question}\label{qu:one_point}
  Find more examples of hypersurfaces $X \subset \PP^{n+1}$ of degree at most
  $n+1$ with $\#X(\FF_q) = 1$.
\end{question}

\begin{example}[H.-C. Graf v. Bothmer]
  We construct hypersurfaces $X \subset \PP^{n+1}$ over $\FF_2$ containing
  exactly one rational point. 

  We start by constructing an affine equation. Note that the polynomial $f :=
  x_0\cdots x_{n+1}$ vanishes in every $\xx \in \FF_2^{n+2}$ except
  $(1,\dots,1)$, while $g := (x_0-1)\cdots(x_{n+1}-1)$ vanishes in every
  point except $(0,\dots,0)$. Therefore, the polynomial $h:=f+g+1$ vanishes
  only in $(0,\dots,0)$ and $(1,\dots,1)$.

  The only monomial of degree at least $n+2$ occurring in $f$ and $g$ is
  $x_0\cdots x_{n+1}$, while the constant term 1 occurs in $g$ but not in $f$.
  Therefore, $h$ is a polynomial of degree $n+1$ without constant term. We
  construct the homogeneous polynomial $H$ of degree $n+1$ from $h$ by
  replacing each monomial $x_{i_1}x_{i_2}\cdots x_{i_r}$ of degree $r \in \{1,
  \dots, n+1\}$ of $h$ with $i_1< \dots < i_r$ by $x_{i_1}^{k+1} x_{i_2}\cdots
  x_{i_r}$ of degree $n+1$ (where $k = n+1-r$). Since $a^k = a$ for any $k \ge
  1$ and $a \in \FF_2$, we have $h(\xx)=H(\xx)$ for any $\xx \in \FF_2^{n+2}$;
  the homogeneous polynomial $H$ vanishes exactly in $(0, \dots, 0)$ and $(1,
  \dots, 1)$.

  Therefore, $H$ defines a degree $n+1$ hypersurface  $\PP^{n+1}$
  containing exactly one $\FF_2$-rational point $(1:\dots:1)$.

  Using a computer, we can check for $n=2, 3, 4$ that $H$ defines a smooth
  hypersurface of dimension $n$. Note that for $n=2$, the resulting cubic
  surface is isomorphic to the one constructed in
  Proposition~\ref{prop:one_point}. For $n \ge 5$, it is unknown whether $H$
  defines a smooth variety.
\end{example}

\begin{example} Let $\alpha_1$ be a generator or $\FF_{q^m}/\FF_q$
with conjugates $\alpha_i$. It is easy to see that
$$
X(\alpha):=\Bigl(\prod_i(x_1+\alpha_ix_2+\cdots+\alpha_i^{m-1}x_m)=0\Bigr)
\subset \PP^m
$$
has a unique $\FF_q$-point at $(1:0:\cdots:0)$. 
$X(\alpha)$ has degree $m$, it is irreducible over
$\FF_q$ but over $\FF_{q^m}$
it is the union of $m$ planes.

Assume now that $q\leq m-1$. 
Note that $x_i^qx_j-x_ix_j^q$ is identically zero on $\PP^m(\FF_q)$.
Let $H$ be any homogeneous degree $m$ element of the ideal
generated by all the $x_i^qx_j-x_ix_j^q$. Then $H$ is also
identically zero on $\PP^m(\FF_q)$, thus
$$
X(\alpha,H):=\Bigl(\prod_i(x_1+\alpha_ix_2+\cdots+\alpha_i^{m-1}x_m)=H\Bigr)
\subset \PP^m
$$
also has a unique $\FF_q$-point at $(1:0:\cdots:0)$.

By computer it is again possible to find further examples of
smooth hypersurfaces with a unique point, but
the computations seem exceedingly lengthy for $m\geq 6$.
\end{example}

\begin{remark} Let $X\subset \PP^{n+1}$ be a smooth hypersurface of degree
$d$. Then the primitive middle Betti number is
$$
\frac{(d-1)^{n+2}+(-1)^n}{d}+1+(-1)^n\leq d^{n+1}.
$$
Thus by the Deligne-Weil estimates
$$
\bigl| \# X(\FF_q)-\# \PP^n(\FF_q)\bigr|\leq d^{n+1} q^{n/2}.
$$
Thus we get that for $d=n+1$, there are more then
$\# \PP^{n-1}(\FF_q)$ points in $X(\FF_q)$ as soon as
$q\geq (n+1)^{2+\frac2{n}}$. 

\end{remark}

\section{Unirationality}

\begin{definition}
  A variety $X$ of dimension $n$ defined over a field $K$ is called
  \emph{unirational} if there is a dominant map $\phi: \PP^n \rto X$,
  also defined over $K$.
\end{definition}

\begin{exercise} \cite[2.3]{k-uni}  Assume that 
there is a dominant map $\phi: \PP^N \rto X$ for some $N$.
Show that $X$ is unirational.
\end{exercise}

The following result was proved by Segre \cite{segre} in the case $n=2$,
by Manin \cite{manin} for arbitrary $n$ and general $X$
when $K$ is not a finite
field with ``too few'' elements, and in full generality
 by Koll\'ar \cite{k-uni}.

\begin{theorem}\label{thm:unirationl-rational_point}
  Let $K$ be an arbitrary field and  $X \subset \PP^{n+1}$ a smooth cubic
  hypersurface ($n \ge 2$). Then the following are equivalent:
  \begin{enumerate}
  \item\label{it:dominant_map} $X$ is unirational over $K$.
  \item\label{it:rational_point} $X(K) \ne \emptyset$.
  \end{enumerate}
\end{theorem}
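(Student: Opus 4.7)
Direction (\ref{it:dominant_map}) $\Rightarrow$ (\ref{it:rational_point}) is essentially automatic. A dominant $K$-rational map $\phi:\PP^n\dashrightarrow X$ has a nonempty open domain of definition $U\subset\PP^n$, and $\phi(U(K))\subset X(K)$; when $K$ is infinite, $U(K)\ne\emptyset$ by Zariski density of $\PP^n(K)$, and when $K=\FF_q$ is finite, $X(K)\ne\emptyset$ is already guaranteed by Chevalley--Warning as recalled in the introduction.

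For the substantive direction (\ref{it:rational_point}) $\Rightarrow$ (\ref{it:dominant_map}), fix $p\in X(K)$. The plan is to produce a dominant $K$-rational map from a $K$-unirational variety of dimension $2n-2$ onto $X$; by the exercise immediately preceding the theorem, this suffices for unirationality. The starting object is the tangent hyperplane section
$$Y\ :=\ X\cap T_pX\ \subset\ T_pX\cong\PP^n,$$
a cubic hypersurface of $\PP^n$ having $p$ as a double point. Projection from $p$ then gives a $K$-birational map $Y\dashrightarrow\PP^{n-1}$: a generic line $\ell\subset T_pX$ through $p$ meets $Y$ with multiplicity two at $p$ and at a single residual point. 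In particular $Y$ is $K$-rational.

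The main construction varies this picture over $Y$. For a moving smooth point $q\in Y$ of $X$, the cubic $Y_q := X\cap T_qX\subset T_qX\cong\PP^n$ is again a cubic with a double point at $q$, hence $K(q)$-rational by projection from $q$. Assemble the incidence variety
$$W\ :=\ \{(q,x)\in Y\times X : x\in T_qX\},$$
so that the first projection realizes $W$ as the family $\{Y_q\}_{q\in Y}$ and $\dim W=2n-2$. Writing the polar form of the defining cubic of $X$ as $P$, the fiber of the second projection $\pi_X:W\to X$ over $x\in X$ is $Y\cap\{q:P(x,q,q)=0\}$, i.e.\ the intersection of $Y$ with the polar quadric of $x$. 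For a generic $x$ this polar quadric does not contain $Y$ (otherwise $x$ would lie in $\bigcap_{q\in Y}T_qX$, a fixed linear subspace independent of $x$), so the fiber has codimension one in $Y$ and dimension $n-2\geq 0$, using $n\geq 2$; therefore $\pi_X$ is dominant. Finally, $W$ is $K$-unirational: $Y$ is $K$-rational, and the family $\{Y_q\}_{q\in Y}$ admits a $K$-uniform rational parametrization by $\PP^{n-1}$ obtained by fixing an auxiliary $K$-hyperplane $H\subset\PP^{n+1}$ in general position and parametrizing lines through $q$ in $T_qX$ via their intersection with $H$, then solving for one coordinate on the varying hyperplane $H\cap T_qX\subset H$ in terms of the others. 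Composing produces a dominant $K$-rational map from $\PP^{n-1}\times\PP^{n-1}$ onto $X$, which by the cited exercise forces $X$ to be unirational over $K$.

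The main obstacle I expect is precisely the final $K$-uniformization step: the geometric dominance of $\pi_X$ and the fibrewise rationality of the $Y_q$ are clean dimension-theoretic statements, but producing a single $K$-rational parametrization of the whole family $\{Y_q\}$ by $\PP^{n-1}$ requires an explicit choice of auxiliary linear data (the hyperplane $H$, the coordinate to solve for, etc.) and a check that all the generic-position hypotheses (neither $H\cap T_qX$ nor the polar quadrics degenerate over the locus where the map is built, the auxiliary point used for projection avoids $X$, and so on) can genuinely be realized over $K$ rather than only after a base change. Over very small finite fields this requires either moving $p$ to a more favorable $K$-point or making an especially careful choice of coordinates; this is also where the hypothesis $n\geq 2$ bites, as for $n=1$ there is simply not enough room for the family of $Y_q$ to sweep out $X$.
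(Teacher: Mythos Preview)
Your incidence-variety construction is a clean variant of the paper's first approach: instead of the third-intersection-point map $Y_p\times Y_{p'}\dashrightarrow X$ that the paper uses in its construction (\ref{pf.of.3.3}.1), you sweep out $X$ by the whole family $\{Y_q\}_{q\in Y_p}$. The dominance of $\pi_X:W\to X$ via the polar-quadric fiber count is correct, and the uniform trivialization of the $\PP^{n-1}$-bundle of lines through $q$ in $T_qX$ (via intersection with a fixed auxiliary hyperplane $H$) does work once you unwind it.

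The genuine gap is exactly the one you flag at the end but do not close. Your argument needs $Y=Y_p$ to be geometrically irreducible and not a cone with vertex $p$, so that projection from $p$ gives a $K$-birational map $Y_p\dashrightarrow\PP^{n-1}$; you also need the generic $Y_q$ for $q\in Y_p$ to have the same property. Over small finite fields this can fail for \emph{every} $K$-point: for the Fermat cubic surface $x_0^3+x_1^3+x_2^3+x_3^3=0$ over $\FF_2,\FF_4,\FF_{16}$, every $Y_p$ is reducible (every $\FF_{16}$-point lies on one of the 27 lines). In that situation there is no ``more favorable $K$-point'' to move to, and no choice of auxiliary coordinates rescues the construction as written. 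This is not a technicality; it is the whole difficulty of the theorem.

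The paper's remedy is to abandon the search for a good $K$-point altogether. One passes to the \emph{universal line} through $p$: working over the purely transcendental extension $K(m_1,\dots,m_n)$, the two residual intersection points $s,s'$ live over the quadratic extension $K(m_1,\dots,m_n,\sqrt{D})$ with $D$ the discriminant. Now $s$ is automatically a generic point, so $Y_s$ is irreducible and not a cone, and the Weil restriction $\R_{K(\sqrt D)/K}\PP^{n-1}$ (which is $K$-rational) parametrizes the pair $Y_s\times Y_{s'}$ and descends the third-intersection-point map to $K$. Dominance is then checked after base change to $\bar K$, where your style of argument (or the paper's $\Phi_{1,p,p'}$) applies freely. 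That passage to a transcendental extension is the missing idea.

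A minor point on the easy direction: your appeal to Chevalley--Warning for finite $K$ is correct but uses the specific embedding; the paper instead invokes Nishimura's lemma, which shows more generally that a smooth proper variety receiving a rational map from a variety with a $K$-point must itself have a $K$-point.
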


{\it Proof.}
Let us start with the easy direction: \eqref{it:dominant_map} $\Rightarrow$
 \eqref{it:rational_point}. The proof of the other direction
will occupy the rest of the section.

  If $K$ is infinite, then  $K$-rational points in
  $\PP^n$ are Zariski-dense, so $\phi$ is defined on most of them, giving
  $K$-rational points of $X$ as their image.

  If $K$ is a finite field, $\phi$ might not be defined on any $K$-rational
  point. Here, the result is a special case of the following.

\begin{lemma}[Nishimura]
  Given a smooth variety $Y$ defined over $K$ with $Y(K) \ne \emptyset$ and a
  rational map $\phi: Y \rto Z$ with $Z$ proper, we have $Z(K) \ne \emptyset$.
\end{lemma}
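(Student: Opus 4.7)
The plan is to prove the lemma by induction on $n = \dim Y$, after reducing to the case that $Y$ is irreducible (by replacing $Y$ with the irreducible component through the given $K$-point). In the base case $n = 0$, this means $Y = \Spec K$, so $\phi$ is automatically a morphism and directly produces a $K$-point of $Z$.

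For the inductive step, I would fix a $K$-point $y \in Y(K)$. If $\phi$ is already defined at $y$, we are done. Otherwise, smoothness of $Y$ at $y$ lets us blow up $y$, producing a smooth $\pi: \tilde Y \to Y$ with exceptional divisor $E \cong \PP^{n-1}_K$, itself smooth and carrying plenty of $K$-points. The strategy is to restrict the composed rational map $\tilde\phi := \phi \circ \pi : \tilde Y \rto Z$ to $E$ and apply the inductive hypothesis to the resulting rational map $E \rto Z$ from a smooth $(n-1)$-dimensional variety with a $K$-point into the proper $Z$.

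For this restriction to be a genuine rational map, $E$ must not lie entirely inside the indeterminacy locus of $\tilde\phi$. This is the crux of the argument, and I would invoke the standard fact that a rational map from a smooth (indeed normal) variety to a proper variety has indeterminacy locus of codimension at least $2$. This in turn follows from the valuative criterion of properness applied to the DVR obtained by localizing $\tilde Y$ at the generic point of any prime divisor $D$: properness of $Z$ forces the map to extend across such a $D$, so no divisor can lie in the indeterminacy locus. Since $E$ is a divisor in $\tilde Y$, the restriction $\tilde\phi|_E$ is a well-defined rational map, and induction closes the argument.

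The main obstacle is really just this codimension-two estimate on the indeterminacy locus; everything else is bookkeeping. Note that smoothness of $Y$ is essential throughout (it gives normality of $\tilde Y$ and the clean structure $E \cong \PP^{n-1}_K$); the conclusion actually fails for singular $Y$, since a rational map from a non-normal variety to a proper variety can genuinely fail to extend across a divisor.
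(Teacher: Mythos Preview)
Your proof is correct and follows essentially the same approach as the paper's proof (attributed to E.~Szab\'o): induction on $\dim Y$, blow up the given $K$-point, and restrict to the exceptional divisor $\PP^{n-1}_K$ using the codimension-$2$ estimate on the indeterminacy locus. Your version is slightly more detailed (separating out the case where $\phi$ is already defined at $y$, and justifying the codimension-$2$ fact via the valuative criterion), but the structure is identical.
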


\begin{proof}[Proof (after E. Szab\'o)]
  We proceed by induction on the dimension of $Y$. If $\dim Y = 0$, the
  result is clear. If $\dim Y = d > 0$, we extend $\phi: Y \rto Z$ to $\phi':
  Y' \rto Z$, where $Y'$ is the blow-up of $Y$ in $p \in Y(K)$. Since a
  rational map is defined outside a closed subset of codimension at least 2,
  we can restrict $\phi'$ to the exceptional divisor, which is isomorphic to
  $\PP^{d-1}$. This restriction is a map satisfying the induction hypothesis.
  Therefore, $X(K) \ne \emptyset$.
\end{proof}

\begin{say}[Third intersection point map]\label{tipm.say}

Let $C\subset \Ptwo$ be a smooth cubic curve.
For $p, p'\in C$  the line $\lin{p}{p'}$ through them
 intersects $C$ in
a unique third point, denote it by $\phi(p,p')$. 
The resulting morphism $\phi: C\times C\to C$
is, up to a choice of the origin and a sign, the group law
on the elliptic curve $C$.

For an arbitrary cubic hypersurface $X$ defined over a  field $K$, 
we can construct the analogous
rational map $\phi: X \times X \rto X$ as follows.
  If $p \ne p'$ and if the line $\lin{p}{p'}$
 does not lie completely in $X$, it intersects $X$ in
a unique third point $\phi(p,p')$. 
If $X_{\bar K}$ is irreducible, 
this defines $\phi$ on an open subset of $X
\times X$.

It is very tempting to believe that out of $\phi$
one can get an (at least  birational) group law on $X$.
This is, unfortunately, not at all the case. 
The book \cite{manin} gives a detailed exploration of 
this direction.

We use $\phi$  to
 obtain a dominant map from a projective space to $X$, relying on
 two basic ideas:

\begin{itemize}
\item Assume that $Y_1, Y_2 \subset X$ are rational subvarieties  such that 
$\dim Y_1+\dim Y_2\geq \dim X$. Then, if $Y_1, Y_2$ are in
``general position,'' 
the
  restriction of $\phi$ gives  a dominant map $Y_1 \times Y_2
  \rto X$. Thus $X$ is unirational since  $Y_1 \times Y_2$ is birational 
to a projective space.
\item How can we find rational subvarieties of $X$?
Pick a rational point
  $p \in X(K)$ and let   $Y_p$ be the intersection of $X$ with 
the tangent hyperplane 
$T_p$ of $X$ in $p$. Note that  $Y_p$
  is  a cubic hypersurface of dimension $n-1$ with a singularity
  at $p$. 

If $p$ is  in ``general position,'' then 
$Y_p$ is irreducible and not a cone. Thus $\pi: Y_p
  \rto \PP^{n-1}$, 
 the projection from $p$,   is birational and so $Y_p$ is rational.
\end{itemize}

From this we conclude that if $X(K)$ has at lest 2 points
in ``general position,'' then $X$ is unirational.
In order to prove unirationality, one needs to
understand the precise meaning of the above 
``general position'' restrictions, and then figure out what to do
if there are no points in ``general position.''
This is especially a problem over finite fields.
\end{say}

\begin{example} \cite{hirsch} Check that over $\FF_2, \FF_4$ and $\FF_{16}$
all points of $(x_0^3+x_1^3+x_2^3+x_3^3=0)$ lie on a line.
In particular, the curves $Y_p$ are  reducible whenever
$p$  is over $\FF_{16}$. Thus there are no points in ``general position.''
\end{example}

\begin{say}[End of the proof of (\ref{thm:unirationl-rational_point})]
\label{pf.of.3.3}
In order to prove (\ref{thm:unirationl-rational_point}),
we describe 3 constructions, working in  increasing generality.

(\ref{pf.of.3.3}.1) 
 Pick $p \in X(K)$. If $Y_p$ is irreducible and not a cone, 
 then  $Y_p$  is birational over $K$ to
  $\PP^{n-1}$.  This gives more $K$-rational points on $Y_p$. We pick $p'
  \in Y_p(K) \subset X(K)$, and if we are lucky again, $Y_{p'}$ 
 is also birational over $K$ to $\PP^{n-1}$. This results in
  \[\Phi_{1,p,p'}:\PP^{2n-2} \stackrel{bir}{\rto} Y_p \times Y_{p'} \rto X,\] 
where the first map is
  birational and the second map is dominant.

This construction works when $K$ is infinite and 
$Y_p$ is irreducible and not a cone.

(\ref{pf.of.3.3}.2) 
 Over  $K$, it might be impossible to find 
$p\in X(K)$ such that $Y_p$ is irreducible.  Here we try to give ourselves
a little more room by passing to a quadratic field extension
 and then coming back to
$K$ using the third intersection point map $\phi$.

  Given $p \in X(K)$,  a  line through $p$ can intersect $X$ in two
  conjugate points $s,s'$ defined over a quadratic field extension
$K'/K$. If $Y_s, Y_{s'}$ (the intersections of $X$ with the
  tangent hyperplanes in $s$ resp. $s'$) are birational to $\PP^{n-1}$ over
  $K'$, consider the map \[\Phi_{1,s,s'}:Y_s \times Y_{s'} \rto X.\]
So far $\Phi_{1,s,s'}$ is defined over $K'$. 

Note however that $Y_s, Y_{s'}$ are conjugates of each other by the
Galois involution of $K'/K$. Furthermore, if $z\in Y_s$ and 
$\bar z\in Y_{s'}$ is its conjugate then the line 
 $\phi(z,\overline z)$ is defined over
  $K$. Indeed,  the Galois action interchanges $z, \overline z$ hence
 the line
  $\lin{z}{\overline z}$  is Galois invariant,  hence the third
  intersection point $\Phi_{1,s,s'}(z,\overline z)$ is defined over $K$. 
  
That is, the involution  $(z_1, z_2)\mapsto (\bar z_2, \bar z_1)$
makes $Y_s \times Y_{s'}$ into a $K$-variety and
 $\Phi_{1,s,s'}$ then becomes a $K$-morphism.
Thus we obtain
  a dominant map
  \[\Phi_{2,p,L}:\R_{K'/K}\PP^{n-1} \rto X\] where
  $\R_{K'/K}\PP^{n-1}$ is the Weil restriction of $\PP^{n-1}$ (cf.
  Example~\ref{ex:weil_restriction}).

This construction works when $K$ is infinite, even if  
$Y_p$ is reducible or a cone.
However, over a finite field, it may be impossible to find  a suitable line
$L$. 

As a last try, if none of the lines work, let's work with all lines
together!

(\ref{pf.of.3.3}.3)
  Consider the \emph{universal line} through $p$ instead of
  choosing a specific line. That is, we are working with all lines at once.  
To see what this means, choose  an affine equation such that
$p$ is at the origin:
$$
L(x_1,\dots,x_{n+1})+Q(x_1,\dots,x_{n+1})+C(x_1,\dots,x_{n+1})=0,
$$
where $L$ is linear, $Q$ is quadratic and $C$ is cubic.
The universal line is given by  $(m_1t, \dots, m_{n}t, t)$
where the $m_i$ are algebraically independent over $K$
and the quadratic formula gives the points $s,s'$ at
$$
t=\frac{-Q(m_1, \dots, m_{n}, 1)\pm\sqrt{D(m_1, \dots, m_{n}, 1)}}
{2C(m_1, \dots, m_{n-1}, 1)},
$$
where $D=Q^2-4LC$ is the discriminant.

Instead of working with just one pair $Y_s, Y_{s'}$, we work with the
universal family of them defined over the field
$$
K\Bigl(m_1, \dots,  m_{n}, \sqrt{D(m_1, \dots, m_{n}, 1)}\Bigr)
$$

It does not matter any longer that $Y_s$ may be reducible
for every $m_1, \dots,  m_{n}\in K$ since we are working with all the $Y_s$
together and the generic $Y_s$ is irreducible and not a cone.

Thus we get  a map 
$$
\Phi_{3,p}: \PP^n \times \PP^{n-1}
  \times \PP^{n-1}  \stackrel{bir}{\rto}
\R_{K\bigl(m_1, \dots,  m_{n}, \sqrt{D}\bigr)/
K\bigl(m_1, \dots,  m_{n}\bigr)}\PP^{n-1} \rto X.
$$

The last step is the following  observation.
Unirationality of $X_K$ changes if we extend $K$. However,
once we have a $K$-map $\PP^{3n-2} \rto X$, its dominance can be
checked after any field extension. Since
$\Phi_{3,p}$ incorporates all 
$\Phi_{2,p,L}$, we see that the $K$-map  $\Phi_{3,p}$
is dominant if $\Phi_{2,p,L}$ is dominant for some $\bar K$-line $L$.

 Thus  we can check dominance over
  the algebraic closure of $K$,
 where the techniques of the previous cases work.

There are a few remaining points to settle
(mainly that $Y_p$ is irreducible and not a cone
for general $p\in X(\bar K)$ and that
$\Phi_{1,p,p'}$ is dominant for general $p, p'\in X(\bar K)$).
These are left to the reader.
For more details, see
\cite[Section~2]{k-uni}.
\end{say}

\begin{example}\label{ex:weil_restriction}
  We give an explicit example of the construction of the Weil restriction. 
The aim of Weil restriction is to start with a finite field extension
$L/K$ and an $L$-variety
$X$ and construct in a natural way a $K$-variety 
$\R_{L/K}X$ such that  $X(L)=\bigl(\R_{L/K}X\bigr)(K)$.

As a good example, assume that the characteristic is $\neq 2$ and 
  let $L = K(\sqrt a)$ be a quadratic field extension
with  $G:=\Gal(L/K) =\{\id , \sigma\}$.
Let $X$ be an $L$-variety and $ X^{\sigma}$ its conjugate over $K$.

Then $X\times X^{\sigma}$ is an $L$-variety. We can define a
$G$-action on it by
$$
\sigma: (x_1,  x_2^{\sigma})\mapsto (x_2,  x_1^{\sigma}).
$$
This makes $X\times X^{\sigma}$ into the $K$-variety
$\R_{L/K}X$.

We explicitly construct
  $\R_{L/K}\Pone$, which is all one needs for the surface
case of (\ref{thm:unirationl-rational_point}).

Take the product of two copies of $\Pone$ with the $G$-action 
 $$
((s_1:t_1),(s_2^{\sigma}:t_2^{\sigma}))\mapsto
 ((s_2:t_2),(s_1^{\sigma}:t_1^{\sigma})).
$$
  Sections of $\OO(1,1)$ invariant under $G$ are \[u_1:=s_1s_2^{\sigma}, \quad
  u_2:=t_1t_2^{\sigma},\quad u_3:=s_1t_2^{\sigma}+s_2^{\sigma}t_1,\quad
 u_4:=\frac{1}{\sqrt
    a}(s_1t_2^{\sigma}-s_2^{\sigma}t_1).\] These sections satisfy
  $u_3^2-au_4^2=4u_1u_2$, and in fact, this equation defines $\R_{L/K}\Pone$
  as a subvariety of $\Pthree$ over $K$. 

Thus  $\R_{L/K}\Pone$ is a quadric surface with $K$-points
(e.g., $(1:0:0:0)$), hence rational over $K$.

  Let us check that  $(\R_{L/K}\Pone)(K) = \Pone(L)$. Explicitly, one
  direction of this correspondence is as follows. Given $(x_1+\sqrt a
  x_2:y_1+\sqrt a y_2) \in \Pone(L)$ with $x_1,x_2,y_1,y_2\in K$, we get the
  $G$-invariant point 
\[\bigl((x_1+\sqrt a x_2:y_1+\sqrt a y_2),(x_1-\sqrt a
  x_2:y_1-\sqrt a y_2)\bigr) \in ((X \times X^{\sigma})(L))^G.\] 
From this, we compute
  \[u_1 = x_1^2-ax_2^2,\quad u_2 = y_1^2-ay_2^2,\quad u_3=
  2(x_1y_1-ax_2y_2),\quad u_4=2(x_2y_1-x_1y_2).\] Then $(u_1:u_2:u_3:u_4) \in
  \R_{L/K}\Pone(K)$.

For the precise definitions and for
  more information, see \cite[Section~7.6]{bo-lu} or 
\cite[Definition~2.1]{k-uni}.

\end{example}

In order to illustrate the level of our ignorance about
unirationality, let me mention the following problem.

\begin{question}
  Over any field $K$, find an example of a smooth hypersurface $X \subset
  \PP^{n+1}$ with $\deg X \le n+1$ and $X(K) \ne \emptyset$ that is not
  unirational. So far, no such $X$ is known.
\end{question}

The following are 2 further  incarnations of the
third intersection point map.

\begin{exercise} Let $X^n$ be an irreducible cubic
hypersurface. Show that $S^2X$ is birational to
$X\times \PP^n$, where $S^2X$ denotes the symmetric square of $X$,
that is, $X\times X$ modulo the involution $(x,x')\mapsto (x',x)$.
\end{exercise}

\begin{exercise} Let $X^n$ be an irreducible cubic
hypersurface defined over $K$ and $L/K$ any
quadratic extension. Show that there
is a map  $\R_{L/K}X\rto X$.
\end{exercise}

\section{Separably rationally connected varieties}
\label{sep-rat.sec}

Before we start looking for rational curves on varieties over
finite fields, we should contemplate which varieties contain
plenty of rational curves over an algebraically closed
field. There are various possible ways of defining what we mean by
lots of rational curves, here are some of them.

\begin{say}\label{src.cond}
Let $X$ be a smooth projective variety over an algebraically closed
field $\Kbar$.
Consider the following conditions:
\begin{enumerate}
\item\label{it:two_points} For any given $x,x' \in X$, there is $f : \Pone \to X$ such that $f(0)=x,
  f(\infty) = x'$.
\item\label{it:m_points} For any given $x_1, \dots, x_m \in X$, there is $f: \Pone \to X$ such that
  $\{x_1, \dots, x_m\} \subset f(\Pone)$.
\item\label{it:tangent} Let 
 $Z \subset \Pone$ be a zero-dimensional
  subscheme and $f_Z: Z \to X$ a morphism. Then  $f_Z$
 can be extended to $f:\Pone \to X$.
\item\label{it:tangent_positive} Conditions  \eqref{it:tangent} holds, and
  furthermore $f^*T_X(-Z)$ is ample.  
(That is, $f^*T_X$ is a sum of line
  bundles each of degree at least $|Z|+1$.)
\item\label{it:positive} There is $f : \Pone \to X$ such that $f^*T_X$ is
  ample.
\end{enumerate}
\end{say}

\begin{theorem} \cite{kmm2}, \cite[Sec.4.3]{rc-book} 
\label{src.main.thm} Notation as above.
\begin{enumerate}
 \item  If  $\Kbar$ is an  uncountable field of
  characteristic $0$ then the  conditions
  \ref{src.cond}.\ref{it:two_points}--\ref{src.cond}.\ref{it:positive} are equivalent. 
\item For any  $\Kbar$, condition \ref{src.cond}.\ref{it:positive}
implies the others. 
\end{enumerate}
\end{theorem}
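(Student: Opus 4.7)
The plan is to separate out the chain of easy implications from the two substantive steps: the universal implication $(5)\Rightarrow(4)$ and the converse $(1)\Rightarrow(5)$ that uses characteristic zero together with uncountability.

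First, the trivial implications. Condition $(4)$ contains $(3)$ as its first half. Taking $Z$ to be a union of $m$ distinct points of $\Pone$ and $f_Z$ any map onto $\{x_1,\ldots,x_m\}$ gives $(3)\Rightarrow(2)$, and $(2)\Rightarrow(1)$ is immediate. For $(4)\Rightarrow(5)$, apply $(4)$ with $Z=\emptyset$ (or a single point) to produce $f:\Pone\to X$ with $f^*T_X$ ample. These observations, together with the main implication $(5)\Rightarrow(4)$, prove part~(2) of the theorem and give half of part~(1).

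For the central implication $(5)\Rightarrow(4)$, I would run the standard combs-and-smoothing machinery. Start from $f_0:\Pone\to X$ with $f_0^*T_X$ ample; equivalently every summand of $f_0^*T_X\cong\bigoplus\OO(a_i)$ has $a_i\geq 1$. Since $H^1(\Pone,f_0^*T_X)=0$, the scheme $\mathrm{Mor}(\Pone,X)$ is smooth at $[f_0]$, and the evaluation map $\mathrm{Mor}(\Pone,X)\times\Pone\to X$ is smooth at every $([f_0],t)$; as $X$ is smooth and $f_0$ is nonconstant, this evaluation is dominant and hence surjective by properness of $X$. So through any prescribed point $x\in X$ there passes a very free $\Pone$, and indeed, by composing with automorphisms of $\Pone$ and possibly attaching and smoothing a short chain, we can arrange that $x$ is hit at a prescribed parameter with $f^*T_X$ of arbitrarily large positivity. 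Given $Z\subset\Pone$ with $f_Z:Z\to X$, for each point $p_i\in\mathrm{Supp}\,Z$ with image $x_i=f_Z(p_i)$ choose a very free tooth $h_i:\Pone\to X$ with $h_i(0)=x_i$ and $h_i^*T_X$ very ample (of degree large enough to absorb $|Z|$). Form the comb $C$ whose handle is $\Pone$ with nodes at the $p_i$ and whose teeth are the $h_i$, glued at $0\in\Pone_i$, and map $C\to X$ by the $h_i$ on the teeth and by a suitable initial extension $g_0$ on the handle. Attaching enough very free teeth forces $H^1$ of the Hom-obstruction sheaf (in the variant where $f_Z$ is fixed) to vanish, so Horikawa/Kollár smoothing provides a deformation of $[C\to X]$ to a map $f:\Pone\to X$ agreeing with $f_Z$ on $Z$; positivity of the $h_i^*T_X$ propagates in the smoothing to give ampleness of $f^*T_X(-Z)$. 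The main obstacle here is the smoothing step, which requires controlling the obstruction group $H^1$ of the comb's tangent-type sheaf relative to the sections through $Z$; this is where the degrees of the teeth must be chosen large enough, and where one invokes the standard ``glueing of free curves smooths to a free curve'' lemma.

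For the converse $(1)\Rightarrow(5)$ in part~(1), which needs $\mathrm{char}\,\Kbar=0$ and $\Kbar$ uncountable, the strategy is a bootstrap. By $(1)$, the universal evaluation $\mathrm{ev}:\mathrm{Mor}(\Pone,X)\times\Pone^2\to X\times X$ is dominant, and generic smoothness in characteristic zero gives a rational curve $g:\Pone\to X$ which is \emph{free}, i.e.\ $g^*T_X$ is semi-positive. Using freeness and uncountability, a general deformation of $g$ avoids any prescribed countable union of proper subvarieties, so we can arrange many free curves through many general points. Now glue a long comb of free curves, applying $(1)$ between successive tips as the ``handle'', and smooth; each added tooth of positive degree strictly increases the positivity of the restricted tangent bundle, and after attaching sufficiently many one lands in the very-free locus, producing $f:\Pone\to X$ with $f^*T_X$ ample. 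The hard part is exactly this bootstrap from free to very free: one must know that each smoothing preserves rationality and genuinely improves positivity, and the uncountability hypothesis is what lets the repeated ``general point'' choices be simultaneously satisfied, bypassing the absence of Baire-category arguments in the algebraic setting. In positive characteristic the generic smoothness step genuinely fails (inseparable uniruling phenomena), which is why only $(5)$, and not $(1)$, suffices for the full package in part~(2).
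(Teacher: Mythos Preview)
The paper does not prove this theorem at all; it simply cites \cite{kmm2} and \cite[Sec.~4.3]{rc-book} and moves on. So there is no in-paper argument to compare against, and your sketch should be judged on its own.

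The trivial implications and the overall architecture are fine, but there is a genuine gap in your $(5)\Rightarrow(4)$ step. You build a comb whose handle is $\Pone$ and say you will ``map $C\to X$ by the $h_i$ on the teeth and by a suitable initial extension $g_0$ on the handle.'' That $g_0$ is a map $\Pone\to X$ with $g_0|_Z=f_Z$, which is exactly condition~(3); you are assuming what you want to prove. The standard route in the cited references avoids this circularity: from a single very free $f_0$ one first shows (via smoothness of the evaluation map) that there is a very free curve through \emph{any} one point; then one boosts positivity, either by precomposing with a degree-$d$ self-map of $\Pone$ so that every summand of $f^*T_X$ has degree $\ge d$, or by gluing two very free curves at a common point and smoothing. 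Once $f^*T_X$ is positive enough, the jet-level evaluation $\Hom(\Pone,X)\to \Hom(Z,X)$ is smooth and dominant, which produces the extension and the ampleness of $f^*T_X(-Z)$ simultaneously. No pre-existing $g_0$ is needed.

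Your $(1)\Rightarrow(5)$ sketch also misplaces the role of uncountability. It is not used to make ``general deformations avoid countable unions'' after you already have a free curve; it is used at the very first step, to pass from ``every pair $(x,x')$ lies on some rational curve'' to ``some irreducible component of $\Hom(\Pone,X)$ has dominant two-point evaluation to $X\times X$.'' Since $\Hom(\Pone,X)$ has only countably many components, if none dominated then $X\times X$ would be a countable union of proper closed subsets, impossible over an uncountable field. Once a dominating component is in hand, generic smoothness (characteristic~$0$) yields a free curve, and the free-to-very-free bootstrap (attach free teeth at general points and smooth) proceeds without further appeal to uncountability.
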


\begin{definition} Let $X$ be a smooth projective variety over a
field $K$. We say that $X$ is {\it separably rationally connected}
or {\it SRC} if the  conditions
  \ref{src.cond}.\ref{it:two_points}--\ref{src.cond}.\ref{it:positive} hold for $X_{\Kbar}$.
\end{definition}

\begin{remark} There are 2 reasons why  the  conditions
  \ref{src.cond}.\ref{it:two_points}--\ref{src.cond}.\ref{it:positive} are not always equivalent.

First, in positive characteristic, there are inseparably unirational
varieties. These also satisfy the  conditions
  \ref{src.cond}.\ref{it:two_points}--\ref{src.cond}.\ref{it:m_points}, but usually
not \ref{src.cond}.\ref{it:positive}. For instance, if $X$ is an
inseparably unirational surface of general type, then 
\ref{src.cond}.\ref{it:positive} fails. Such examples are given by
(resolutions of) a hypersurface of the form
$z^p=f(x,y)$ for $\deg f\gg 1$.

Second, over countable fields, 
it could happen that (\ref{src.cond}.\ref{it:two_points})
holds but 
$X$ has only countably many rational curves. In particular, 
the degree of the required $f$ depends on $x,x'$.
These examples are not easy to find, see \cite{bog-tsch}
for some over $\FFqbar$. It is not known if this can happen over
$\overline{\mathbb Q}$ or not.

  Over countable fields of characteristic $0$, we must require the existence
  of $f : \Pone \to X$ of \emph{bounded degree} in these conditions in order
  to obtain equivalence with \ref{src.cond}.\ref{it:positive}.
\end{remark}

\begin{example}
  Let $S \subset \Pthree$ be a cubic surface. Over the algebraic closure, $S$
  is the blow-up of $\Ptwo$ in six points. Considering $f$ mapping $\Pone$ to
  a line in $\Ptwo$ not passing through any of the six points, we see that $S$
  is separably rationally connected.
 
  More generally, any rational surface is separably rationally connected.

It is  not quite trivial to see that for any normal cubic surface $S$
that is  not a cone, there is a morphism to the smooth locus
$f:\Pone\to S^{ns}$ such that $f^*T_S$ is ample. 

Any normal cubic hypersurface is also separably rationally connected,
except cones over cubic curves.
To see this, take repeated general hyperplane sections until we get
a normal cubic surface $S\subset X$ which is not a cone. 
The normal bundle of $S$ in $X$ is ample, hence the
$f:\Pone\to S^{ns}$ found earlier also works for $X$.

In characteristic 0, any smooth hypersurface $X\subset \PP^{n+1}$
of degree $\leq n+1$ is SRC; see \cite[Sec.V.2]{rc-book}
for references and various stronger versions.
 Probably every normal hypersurface 
is also SRC, except for cones.

In positive characteristic the situation is more complicated.
A general hypersurface of degree $\leq n+1$ is SRC,
but it is not known that every smooth hypersurface
of degree $\leq n+1$ is SRC. There are some
 mildly singular hypersurfaces which are not SRC, see
\cite[Sec.V.5]{rc-book}.
\end{example}

\begin{say}[Effective bounds for hypersurfaces]\label{low.deg.rtl.curves}

Let $X\subset \PP^{n+1}$ be a smooth SRC hypersurface
over $\bar K$. Then (\ref{src.main.thm}) implies that
there are rational curves through any point or any 2 points.
Here we consider effective bounds for the degrees of such curves.

First, if $\deg X <n+1$ then through every point there are lines.
For a general point, the general line is also free (cf.\ (\ref{free.vf.maps})).

If $\deg X=n+1$ then there are no lines through a general
point, but usually there are conics.  However, on
a cubic surface  there are no irreducible
conics through  an Eckart point $p$. (See (\ref{cubic.surf.1pt.exmp}) for
the definition and details.)

My guess is that in all cases there are free twisted cubics through any point,
but this may be difficult to check.
I don't know any reasonable effective upper bound.

For 2 general points $x, x'\in X$, there is an irreducible
rational curve  of degree
$\leq n(n+1)/(n+2-\deg X)$ by \cite{kmm1}. The optimal result should be closer
to $n+1$, but this is not known. Very little is known
about non-general points.
\end{say}

Next we show that (\ref{koll.conj}) depends only on the birational class
of $X$. The proof also shows that (\ref{src.cond}.\ref{it:tangent}) is also
a birational property.

\begin{proposition} \label{conj.bir.inv} Let $K$ be a field and $X, X'$ 
 smooth projective $K$-varieties
which are birational to each other.
Then, if (\ref{koll.conj}) holds for $X$, it also holds for $X'$.
\end{proposition}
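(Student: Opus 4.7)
The plan is to reduce to the case of a single birational morphism by passing through a common resolution. Choose a smooth projective $K$-variety $Y$ together with birational morphisms $p \colon Y \to X$ and $p' \colon Y \to X'$ such that $p' = g \circ p$ as rational maps, where $g \colon X \rto X'$ is the given birational map; such $Y$ exists by resolving the closure of the graph of $g$ in $X \times X'$. It then suffices to prove two auxiliary claims: (a) the extension property for $X$ implies it for $Y$, and (b) the extension property for $Y$ implies it for $X'$.

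Claim (b) is the easier direction. Given $\phi' \colon Z \to X'$, I would lift $\phi'$ pointwise to $\phi_Y \colon Z \to Y$: over each closed point $z \in Z$ the fiber $(p')^{-1}(\phi'(z))$ is a projective rationally chain connected $k(z)$-variety (a general property of fibers of birational morphisms of smooth varieties), and such a variety has a $k(z)$-point over the fields of interest here---for $K$ finite, by Esnault's theorem on rational points of rationally connected varieties over finite fields. Applying the hypothesis for $Y$ to $\phi_Y$ produces $\Phi_Y \colon C \to Y$ extending $\phi_Y$, and then $\Phi' := p' \circ \Phi_Y$ extends $\phi'$.

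Claim (a) is the harder direction. Given $\phi_Y \colon Z \to Y$, I would set $\phi := p \circ \phi_Y$ and apply the hypothesis for $X$ to extend it to $\Phi \colon C \to X$. Since $C$ is a smooth curve and $Y$ is proper, the rational map $p^{-1} \circ \Phi$ extends uniquely to a morphism $\Phi_Y \colon C \to Y$ with $p \circ \Phi_Y = \Phi$. Unfortunately $\Phi_Y(z)$ need not equal $\phi_Y(z)$ when $\phi(z)$ lies in the exceptional locus of $p$; in that case both points sit in the same (rationally chain connected) fiber $F_z := p^{-1}(\phi(z))$. To repair the discrepancy at each such $z$, I would join $\Phi_Y(z)$ to $\phi_Y(z)$ by a chain of rational curves inside $F_z$ and glue this chain to $C$ at $z$, obtaining a comb $C^+ \to Y$ whose ``new tips'' at the points of $Z$ realize $\phi_Y$. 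The final step is to smooth $C^+$ back to the original curve $C$ while fixing the images on $Z$, using the comb-smoothing theory developed in Section \ref{combs.sec}.

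The main obstacle is this last smoothing step. One must verify that the assembled comb is free enough that the deformation-theoretic smoothing applies with prescribed boundary data on $Z$; concretely, the required positivity (ampleness of the pullback of the tangent bundle twisted by $-Z$, cf.\ \ref{src.cond}.\ref{it:tangent_positive}) must be inherited from the corresponding property on $X$ together with a positive contribution from the teeth lying in fibers of $p$. Confirming this positivity and then running the smoothing is the technical heart of the argument, and is parallel to the constructions in Section \ref{combs.sec}; a secondary subtlety is the lifting in (b), where producing $k(z)$-points on fibers of $p'$ depends on invoking the appropriate result on rational points of rationally connected varieties.
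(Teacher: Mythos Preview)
Your approach is genuinely different from the paper's, and it has real gaps that would prevent it from going through as stated.

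The paper's argument is purely formal and avoids any geometric constructions. Given $f'_Z:Z\to X'$, it first lifts $f'_Z$ to a map from the formal neighborhood $\spec L[[t]]$ of each point of $Z$ into $X'$ (possible since $X'$ is smooth), chosen so that $\phi^{-1}$ is defined there. Composing with $\phi^{-1}$ gives power series $x_i(t)$; the key computation shows that if $F^*:\spec L[[t]]\to X$ agrees with these $x_i(t)$ to order $s=r+m$ (where $r$ is the order of vanishing picked up by the rational map $\phi$), then $\phi\circ F^*$ agrees with the original lift to order $m$. Thus one merely has to \emph{thicken} $Z\subset Z_t\subset C$ appropriately, apply the hypothesis for $X$ to the thickened data $f_{Z_t}:Z_t\to X$, and compose the resulting $\Phi:C\to X$ with $\phi$. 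No resolutions, no combs, no smoothing.

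Now the problems with your route. First, you need a smooth common model $Y$, but resolution of singularities is not known in positive characteristic in dimension $\geq 4$; since the Conjecture is primarily about $\FF_q$, this is fatal as written. Second, $Z$ is a zero-dimensional \emph{subscheme}, not a finite set of points: it can carry nilpotents (this is exactly the content of ``extending a map from $Z$''). Your lifting in (b) via Esnault-type statements produces a $k(z)$-point in the fiber but says nothing about extending the jet $\spec\OO_{Z,z}\to X'$ to $Y$; likewise the ``repair'' in (a) ignores the infinitesimal data entirely. Third, even granting all that, the comb-smoothing machinery of Section~\ref{combs.sec} is set up for genus-$0$ combs deforming to $\Pone$; it does not let you attach teeth to an arbitrary curve $C$ and deform back to $C$ while fixing a length-$|Z|$ subscheme. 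The paper's thickening trick sidesteps all three issues simultaneously.
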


Proof.  Assume for notational simplicity that $K$ is perfect.
Fix embeddings $X\subset \PP^N$ and $X'\subset \PP^M$ and represent
the birational maps $\phi:X\rto X'$
and $\phi^{-1}:X'\rto X$ with polynomial coordinate functions.

Given $f'_Z:Z\to X'$, we  construct a thickening
 $Z\subset Z_t\subset C$ and $f_{Z_t}:Z_t\to X$
such that if $f:C\to X$ extends $f_{Z_t}$  then
$f':=\phi\circ f$ extends $f'_Z$.

Pick a point $p\in Z$ and let $\hat{\OO}_p\cong L[[t]]$ be its complete
local ring where $L=K(p)$. 
Then the corresponding component of $Z$ is $\spec_K L[[t]]/(t^m)$
for some $m\geq 1$.

By choosing suitable local coordinates
at $f'_Z(p)\in X'$, we can define its completion
by equations
$$
y_{n+i}=G_i(y_1,\dots,y_n)\quad\mbox{where $G_i\in L[[y_1,\dots,y_n]]$.}
$$
Thus $f'_Z$ is given by its coordinate functions
$$
\bar y_1(t),\dots, \bar y_n(t), \dots\in L[[t]]/(t^m).
$$
The polynomials $\bar y_i$ for $i=1,\dots, n$ can be lifted to
$y_i(t)\in L[[t]]$ arbitrarily. 
These then determine liftings $y_{n+i}(t)=G_i(y_1(t),\dots,y_n(t))$
giving a map $F':\spec_K L[[t]]\to X'$. 
In particular, we can choose a
lifting such that  $\phi^{-1}$ is a local isomorphism at the 
image of the generic point of $ \spec_K L[[t]]$.
Thus $\phi^{-1}\circ F'$ and
 $\phi\circ\phi^{-1}\circ F'$ are both defined and
$\phi\circ\phi^{-1}\circ F'=F'$.

Using the polynomial representations for  $\phi, \phi^{-1}$, write
$$
\begin{array}{rcl}
\phi^{-1}\circ (1,y_1(t),\dots,  y_n(t), \dots)&=&(x_0(t),\dots, x_n(t),\dots),
\quad\mbox{and}\\
\phi\circ (x_0(t),\dots, x_n(t),\dots)&=&(z_0(t),\dots,  z_n(t), \dots).
\end{array}
$$
Note that $(z_0(t),\dots,  z_n(t), \dots)$ and
$(1, y_1(t),\dots,  y_n(t), \dots)$ give the same map
$F':\spec_K L[[t]]\to X'$, but we map to projective space. 
Thus all we can say is that 
$$
y_i(t)= z_i(t)/z_0(t) \quad \forall\ i\geq 1.
$$
Assume now that we have $(x^*_0(t),\dots, x^*_n(t),\dots)$
and the corresponding
$$
\phi\circ (x^*_0(t),\dots, x^*_n(t),\dots)=(z^*_0(t),\dots,  z_n^*(t), \dots).
$$
such that 
 $$
x_i(t)\equiv x^*_i(t)\mod t^{s} \quad \forall\ i.
$$
Then also
 $$
z_i(t)\equiv z^*_i(t)\mod t^{s} \quad \forall\ i.
$$
In particular, if $s>r:=\mult_0 z_0(t)$, then
$\mult_0 z^*_0(t)=\mult_0 z_0(t)$ and so
$$
y^*_i(t):= \frac{z^*_i(t)}{z^*_0(t)}\equiv y_i(t)\mod \bigl( t^{s-r}\bigr).
$$
That is, if $F^*:\spec_K L[[t]]\to X$ agrees with
$\phi^{-1}\circ F'$ up to order $s= r+m$
then $\phi\circ F^*$ agrees with
$ F'$ up to order $m= s-r$.

We apply this to every point in $Z$ to obtain the thickening
 $Z\subset Z_t\subset C$ and $f_{Z_t}:Z_t\to X$
as required.\qed

\section{Spaces of rational curves}\label{rcpar.sec}

Assume that $X$ is defined over a non-closed field $K$ and is separably
rationally connected. Then $X$ contains lots of rational curves
over $\Kbar$, but what about rational curves over $K$? We are particularly
interested in the cases when $K$ is one of $\FF_q$, $\QQ_p$ or  $\RR$.

\begin{say}[Spaces of rational curves]
\label{sp.of.rc.say}
Let $X$ be any variety. Subvarieties or subschemes of $X$ come
in families, parametrized by the Chow variety or the Hilbert scheme.
For rational curves in $X$, the easiest  to describe is the
space of maps $\Hom(\Pone,X)$.

  Pick an embedding  $X \subset \PP^N$ and let $F_i$ be 
  homogeneous equations of $X$.

  Any map $f : \Pone \to \PP^N$ of fixed degree $d$ is given by $N+1$
  homogeneous polynomials $(f_0(s,t), \dots, f_N(s,t))$ of degree $d$ in two
  variables $s,t$ (up to scaling of these polynomials). Using the coefficients
  of $f_0, \dots, f_N$, we can regard $f$ as a point in $\PP^{(N+1)(d+1)-1}$.

  We have $f(\Pone) \subset X$ if and only if the polynomials $F_i(f_0(s,t),
  \dots, f_N(s,t))$ are identically zero. Each $F_i$  gives 
$d\cdot\deg F_i+1$ equations of degree
  $\deg F_i$ in the coefficients of $f_0, \dots, f_N$. 

If $f_0, \dots, f_N$ have a common zero, then we get only a lower degree map.
We do not  count these in $\Hom_d(\Pone,X)$.
By contrast we allow the possibility  that $f\in \Hom_d(\Pone,X)$
 is not an embedding but a
degree $e$ map onto a degree $d/e$ rational curve in $X$.
These maps clearly cause some trouble but, as it turns out,  it would be
technically very inconvenient to exclude them from the beginning.

Thus $\Hom_d(\Pone,X)$ is an open subset of a subvariety
of $\PP^{(N+1)(d+1)-1}$ defined by equations of degree 
$\leq \max_i\{\deg F_i\}$.

$\Hom(\Pone, X)$ is the disjoint union of the $\Hom_d(\Pone, X)$
for $d=1,2,\dots$.

  Therefore, finding a rational curve $f : \Pone \to X$   defined
  over $K$ is equivalent to finding $K$-points on 
  $\Hom(\Pone,X)$.

In a similar manner one can treat the space
 $\Hom(\Pone, X, 0\mapsto x)$ 
of those maps $f:\Pone\to X$ that satisfy $f(0)=x$
or  $\Hom(\Pone, X, 0 \mapsto x, \infty \mapsto x')$,
those maps $f:\Pone\to X$ that satisfy $f(0)=x$ and $f(\infty)=x'$.

\end{say}

\begin{say}[Free and very free maps]\label{free.vf.maps} In general, the local
structure of the spaces $\Hom(\Pone, X)$ can be very complicated,
but everything works nicely in certain important cases.

We say that $f:\Pone\to X$ is {\it free} if $f^*T_X$
is semi-positive, that is a direct sum of line bundles
of degree $\geq 0$. We see in (\ref{m0mbar.thm})  that
if $f$ is free then $\Hom(\Pone, X)$ and 
$\Hom(\Pone, X, 0\mapsto f(0))$ are both smooth at $[f]$.

We say that $f:\Pone\to X$ is {\it very free} if $f^*T_X$
is positive or ample, that is, a direct sum of line bundles
of degree $\geq 1$. 
This implies that $\Hom(\Pone, X, 0 \mapsto f(0), \infty \mapsto f(\infty))$
is also smooth at $[f]$.
\end{say}

\begin{remark} Over a nonclosed field $K$ there can be smooth
projective curves $C$ such that $C_{\bar K}\cong \Pone_{\bar K}$
but $C(K)=\emptyset$, thus $C$ is not birational to $\Pone_K$.
When we work with  $\Hom(\Pone, X)$, we definitely miss these
curves. There are various ways to remedy this problem,
but for us this is not important.

Over a finite field $K$, every rational curve is in fact
birational to $\Pone_K$, thus we do not miss anything.
\end{remark}

To get a feeling for these spaces,  let us see what we can  say about the
irreducible components of $\Hom_d(\Pone,X)$ for cubic surfaces.

\begin{example}\label{ex:cubic_curves}
  Let $S \subset \Pthree$ be a cubic surface defined over a non-closed field
  $K$. Consider $\Hom_d(\Pone, S)$ for low values of $d$.
  \begin{itemize}
  \item For $d=1$, over $\Kbar$, there are 27 lines on $S$, so
    $\Hom_1(\Pone,S)$ has 27 components which may be permuted by the action of
    the Galois group $G = \Gal(\Kbar/K)$.
  \item For $d=2$, over $\Kbar$, there are 27 one-dimensional families of
    conics, each obtained by intersecting $S$ with the pencil of  planes
 containing a line on $S$.  These  27 families again may have a non-trivial
    action of $G$.
  \item For $d=3$, over $\Kbar$, there are 72 two-dimensional families of
    twisted cubics on $S$ (corresponding to the 72 ways to map $S$ to $\Ptwo$
    by contracting 6 skew lines; the twisted cubics are preimages of lines in
    $\Ptwo$ not going through any of the six blown-up points). Again
there is no reason to assume that any of these 72 families is fixed
by $G$.

    However, there is exactly one two-dimensional family of plane rational 
cubic curves
    on $S$, obtained by intersecting  $S$ with planes tangent to the points on $S$
    outside the 27 lines. This family is defined over $K$ and is
geometrically irreducible.
  \end{itemize}

All this is not very surprising. A curve $C$ on $S$ determines
a line bundle $\OO_S(C)\in \pic(S)\cong \ZZ^7$, hence we see many
different families in a given degree because there 
are many
different line bundles of a given degree. It turns out that, 
for cubic surfaces, once we fix not just the degree but also
the line bundle $L=\OO_S(C)$, the resulting spaces
$\Hom_L(\Pone, X)$ are irreducible.

This, however, is a very special property of cubic surfaces
and even for smooth hypersurfaces $X$ it is very difficult to
understand the irreducible components of
$\Hom(\Pone, X)$. See \cite{h1, h2, h3, js1} for several examples.

\end{example}

Thus, in principle, we reduced the question of finding
rational curves defined over $K$ to finding $K$-points
of the scheme $\Hom(\Pone, X)$. The problem  is
that $\Hom(\Pone, X)$ is usually much more complicated than $X$.

\begin{say}[Plan to find rational curves]\label{plan.say}
We try to find rational curves defined over a field $K$ in 2 steps.
\begin{enumerate}
\item For any field $K$,  we will be able to write down reducible curves
$C$ and morphisms $f:C\to X$ defined over $K$ and show that
$f:C\to X$ can be naturally viewed as a smooth point $[f]$  in a suitable
compactification
of $\Hom(\Pone, X)$ or  $\Hom(\Pone, X, 0\mapsto x)$.

\item Then we argue that for certain fields $K$, a smooth
$K$-point in a compactification of a variety $U$ leads to a
$K$-point inside $U$.

There are 2 main cases where this works.
\begin{enumerate}
\item (Fields with an analytic inverse function theorem)

These include $\RR, \QQ_p$ or the quotient field of any
local, complete Dedekind domain, see \cite{gra-rem}. 
For such fields, any smooth  point in $\bar U(K)$
has an analytic neighborhood biholomorphic to $0\in K^n$.
This neighborhood has nontrivial intersection with any nonempty
Zariski open set, hence with $U$.

\item (Sufficiently large finite fields)

This method relies on the Lang-Weil estimates. Roughly speaking
these say that a variety $U$ over $\FF_q$ has points if $q\gg 1$,
where the bound on $q$  depends on $U$. We want to apply this to
$U=\Hom_d(\Pone, X)$. We know bounds on its embedding dimension and on
the degrees of the defining equations, but very little else.
Thus we need a form of the Lang-Weil estimates where
the bound for $q$ depends only on these invariants.
\end{enumerate}
\end{enumerate}
\end{say}

We put more detail on these steps in the next sections,
but first let us see an example.

\begin{example} Let us see what we get in a first computation
trying to find a degree 3 rational curve through a point $p$ 
on a cubic surface $S$ over $\FF_q$.

The intersection of $S$ with the tangent plane at $p$ usually gives
a rational curve $C_p$ which is singular at $p$. If we normalize
to get $n:\Pone\to C$, about half the time,  $n^{-1}(p)$ is a conjugate pair
of points in $\FF_{q^2}$. This is not what we want.

So we have to look for planes $H\subset \PP^3$ that pass through
$p$ and are tangent to $S$ at some other point.
How to count these?

Projecting $S$ from $p$ maps to $\PP^2$ and the branch  curve
$B\subset \PP^2$ has degree $4$. Moreover, $B$ is smooth
if $p$ is not on any line. The planes we are looking for 
correspond to the tangent lines of $B$.

By the Weil estimates, a degree 4 smooth plane curve has at least
$q+1-6\sqrt{q}$ points. For $q>33$ this guarantees a
point in $B(\FF_q)$ and so we get a plane $H$ through $p$
which is tangent to $S$ at some point. 

However, this is not always enough. First, we do not want the
tangency to be at $p$. Second, for any line $L\subset S$,
the plane spanned by $p$ and $L$ intersects $S$ in $L$ and a residual conic.
These correspond to double tangents of $B$. The 28 double tangents
correspond to 56 points on $B$.  Thus we can guarantee
an irreducible degree 3 rational curve
  only if we find an $\FF_q$-point on $B$ which different from these
56 points. This needs  $q>121$ for the  Weil estimates
to work. This is getting quite large!

Of course, a line is a problem only if it is
defined over $\FF_q$ and then the corresponding
 residual conic is a rational curve over $\FF_q$
passing through $p$, unless the residual conic is a 
pair of lines. In fact,  if we look for rational curves
of degree $\leq 3$, then $q>33$ works.

I do not know what the best bound for $q$ is.
In any case, we see that even this simple case leads
either to large bounds or to case analysis.

The current methods work reasonably well when $q\gg 1$,
but, even for cubic hypersurfaces,
 the bounds are usually so huge that I  do not even write them down.

Then we see by another method that for cubics we can 
handle small values of $q$. The price we pay is that
the degrees of the rational curves found end up very large.

It would be nice to figure out a reasonably sharp answer
at least for cubics,
Just to start the problem, let me say that
I do not know the answer to the following.
\end{example}

\begin{question} Let $X$ be a smooth cubic hypersurface
over $\FF_q$ and $p,p'\in X(\FF_q)$ two points.
Is there a degree $\leq 9$  rational curve defined over $\FF_q$
passing through $p$ and $p'$?
\end{question}

\begin{exercise} Let $S\subset \PP^3$ be the smooth cubic surface
constructed in (\ref{prop:one_point}).
Show that $S$ does not contain any rational curve of degree $\leq 8$ 
defined over $\FF_2$.

Hints. First prove that the Picard group of $S$ is
generated by the hyperplane sections. Thus any curve
on $S$ has degree divisible by 3.

A degree 3 rational curve would be a plane cubic, these all have
at least 2 points over $\FF_2$.

Next show that any rational curve defined over $\FF_2$
must have multiplicity 3 or more at the unique
$p\in S(\FF_2)$.  A degree 6 rational curve would be
a complete intersection of $S$ with a quadric $Q$.
Show that $S$ and $Q$ have a common tangent plane at $p$
and then prove that $S\cap Q$ has only a double point
if $Q$ is irreducible.
\end{exercise}

\section{Deformation of combs}\label{combs.sec}

\begin{example}\label{rc.over.R.exmp}
  Let  $S$ be a smooth cubic surface over $\RR$ and $p$
  a real point of $S$. Our aim is to find
a rational curve defined over $\RR$ passing through $p$.
 It is easy to find such a rational curve $C$
  defined over $\CC$. Its conjugate $\bar C$  then also passes
  through $p$.  Together, they define a curve 
$C+\bar C\subset S$ which is defined over $\RR$.
So far this is not very useful.

We can view $C+\bar C$ as the image of a map
$\phi_0: Q_0 \to S$ where $Q_0\subset \PP^2$ is  defined by $x^2+y^2=0$.
 Next we would like 
  to construct a perturbation $\phi_\varepsilon : Q_\varepsilon \to S$
  of this curve and of this map. It is easy to perturb $Q_0$ to get
``honest'' rational curves over $\RR$, for instance 
$Q_\varepsilon :=(x^2+y^2=\varepsilon z^2)$. 

The key question is, can we extend $\phi_0$ to $\phi_\varepsilon$?
Such questions are handled by     deformation theory,
originated by Kodaira and Spencer. A complete treatment of the case we
need is in \cite{rc-book} and \cite{ar-ko} is a good introduction.

The final answer is that if $H^1(Q_0, \phi_0^*T_S)=0$, then
  $\phi_\varepsilon$ exists  for $|\varepsilon|\ll 1$.
This 
  allows us to obtain a real rational curve on $S$, and
with a little care we can arrange for it to  pass
  through $p$.

In general, the above method gives the following result:
\end{example}

\begin{corollary} \cite{k-loc}
  Given $X_\RR$ such that $X_\CC$ is rationally connected, there is a real
  rational curve through any $p \in X(\RR)$.
\end{corollary}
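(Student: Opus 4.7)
The plan is to generalize the comb construction sketched in Example~\ref{rc.over.R.exmp}. Because $X_\CC$ is rationally connected and the characteristic is zero, condition~\ref{src.cond}.\ref{it:positive} (which agrees with rational connectedness in characteristic zero by Theorem~\ref{src.main.thm}) yields a very free morphism $f:\Pone_\CC\to X_\CC$ with $f(0)=p$. Let $\bar f$ denote its complex conjugate; because $p\in X(\RR)$, also $\bar f(0)=p$. Glue two copies of $\Pone_\CC$ at the point $0$ to obtain a nodal curve $Q_0$, and define $\phi_0:Q_0\to X_\CC$ to restrict to $f$ on one component and to $\bar f$ on the other. Galois conjugation swaps the components and fixes the node, so both $Q_0$ and $\phi_0$ descend to $\RR$; concretely $Q_0$ is isomorphic over $\RR$ to the singular conic $(x^2+y^2=0)\subset\Ptwo_\RR$ with real node $p$.

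Next I would deform $(Q_0,\phi_0)$ to an honest real rational curve, fixing the node as a marked point sent to $p$. The obstructions to this pointed smoothing lie in $H^1(Q_0,\phi_0^{*}T_X\otimes\mathcal{I}_{\mathrm{node}})$. Pulling back along the normalization $\Pone\sqcup\Pone\to Q_0$, the ideal of the node becomes $\OO(-1)$ on each branch, so the pullback equals $f^{*}T_X(-0)\oplus \bar f^{*}T_X(-0)$, a direct sum of line bundles of degree $\ge 0$ because $f$ (and hence $\bar f$) is very free. Therefore $H^1$ vanishes on each component, and the normalization long exact sequence forces the global obstruction to vanish. Hence the $\RR$-Hom scheme parametrizing pointed maps from smoothings of $Q_0$ is smooth at $[\phi_0]$, with the node-smoothing direction among its tangent vectors.

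The explicit real family $Q_\varepsilon=(x^2+y^2=\varepsilon z^2)$ with $\varepsilon>0$ realizes such a smoothing; each $Q_\varepsilon$ is a smooth real conic with $\RR$-points and hence $\RR$-isomorphic to $\Pone_\RR$. Because $[\phi_0]$ is a smooth real point of the Hom scheme and $\RR$ admits the analytic inverse function theorem (cf.~\ref{plan.say}), some real neighborhood of $[\phi_0]$ is biholomorphic to a ball in Euclidean space; inside it we can choose a real parameter $\varepsilon$ realizing the algebraic smoothing, producing a real morphism $\phi_\varepsilon:Q_\varepsilon\to X_\RR$ that still sends a real point of $Q_\varepsilon$ to $p$. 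Its image is the desired real rational curve through $p$. The main technical obstacle is arranging that the deformation genuinely lands in $X_\RR$ rather than only $X_\CC$; this is handled by noting that the curve $Q_0$, the map $\phi_0$, the marked point, and the smoothing family $\{Q_\varepsilon\}$ were all constructed to be conjugation-invariant, so the entire obstruction-theoretic picture is Galois equivariant and the Hom scheme carries a canonical real structure with $[\phi_0]$ as a smooth real point.
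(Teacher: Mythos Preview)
Your argument is correct and follows essentially the same approach as the paper, which proves this corollary by pointing back to Example~\ref{rc.over.R.exmp}: glue a free rational curve through $p$ to its complex conjugate, check the relevant $H^1$ vanishing, and smooth the resulting real nodal conic using the analytic inverse function theorem. The only caveat is that ``marking the node'' is non-standard (Theorem~\ref{m0mbar.thm} requires smooth marked points, and no section of the family $\{Q_\varepsilon\}$ passes through the node), but your computation $H^1(Q_0,\phi_0^*T_X\otimes\mathcal I_{\mathrm{node}})=0$ is exactly what is needed---it gives surjectivity of the evaluation $H^0(Q_0,\phi_0^*T_X)\to T_pX$, which is the ``little care'' the paper alludes to; the cleaner packaging via a comb with contracted handle and a smooth marked point appears later as Corollary~\ref{k-loc.cor}.
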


We would like to apply a similar strategy to $X_K$ such that $X_\Kbar$ is
separably rationally connected. For a given $x \in X(K)$, we find a curve
$g_1:\Pone\to X$ defined over $\Kbar$ such that  $g_1(0)=x$, 
with conjugates $g_2, \dots,
g_m$. Then $C:=g_1(\Pone) + \dots + g_m(\Pone)$ is defined over $K$. 
Because of the
singularity of $C$ in $x$, it is harder to find a smooth deformation of $C$.
It turns out that there is a very simple way to overcome this problem:
we need to add a whole new $\Pone$ at the point $x$
and look at maps of curves to $X$ which may not be finite.

\begin{definition} Let $X$ be a variety over a field $K$.
An {\it $m$-pointed stable  curve of  genus 0 over $X$}
 is an object $(C, p_1, \dots, p_m,f)$
where 
\begin{enumerate}
\item $C$ is a proper connected   curve with $p_a(C)=0$ defined over $K$
 having only
nodes,  
\item $p_1, \dots, p_m$ are distinct smooth points in $C(K)$, 
\item  $f:C\to X$ is a $K$-morphism, and 
\item $C$ has only finitely many automorphisms that commute with $f$
and fix $p_1, \dots, p_m$.
Equivalently, there is no  irreducible component 
 $C_i\subset C_{\bar K}$ such that $f$ maps $C_i$ to a point
and $C_i$ 
 contains at most 2 special points (that is, nodes of $C$ 
or $p_1, \dots, p_m$).
\end{enumerate}

Note that if $f:C\to X$ is finite, then 
$(C, p_1, \dots, p_m,f)$ is a stable  curve of  genus 0 over $X$,
even if $(C, p_1, \dots, p_m)$ is not a stable
$m$-pointed  genus 0 curve in the usual sense \cite{ful-pan}.

We have shown how to parametrize all maps $\Pone\to X$
by the points of a scheme $\Hom(\Pone, X)$.
Similarly, the methods of  \cite{konts} and \cite{alexeev} 
show  that one can 
parametrize all $m$-pointed genus 0 stable curves of degree $d$ with a single
scheme $\overline{M}_{0,m}(X,d)$. 
For a map $(C, p_1, \dots, p_m,f)$,
the corresponding point in $\overline{M}_{0,m}(X,d)$
is denoted by $[C, p_1, \dots, p_m,f]$.

Given $K$-points $x_1,\dots, x_m\in X(K)$,
the family of those maps $f:C\to X$ that satisfy
$f(p_i)=x_i$ for $i=1,\dots, m$ forms a closed
 scheme 
$$
\overline{M}_{0,m}(X,p_i\mapsto x_i)\subset \overline{M}_{0,m}(X).
$$

See \cite[sec.8]{ar-ko} for more detailed proofs.
\end{definition}

The deformation theory that we need can be conveniently
compacted into one statement. The result basically says
that the deformations used in (\ref{rc.over.R.exmp})
exist for any reducible rational curve.

\begin{theorem} \label{m0mbar.thm}
(cf.\ \cite[Sec.II.7]{rc-book} or \cite{ar-ko}) Let 
$f:(p_1,\dots, p_m\in C)\to X$ be  an $m$-pointed genus 0 stable curve.
Assume that $X$ is smooth and   
$$
H^1(C, f^*T_X(-p_1-\cdots-p_m))=0.
$$
Then: 
\begin{enumerate}
\item There is a unique irreducible component
$$
{\rm Comp}(C,p_1,\dots,p_m,f)\subset \overline{M}_{0,m}(X,p_i\mapsto f(p_i))
$$
which contains $[C,p_1,\dots,p_m,f]$.
\item $[C,p_1,\dots,p_m,f]$ is a smooth point of
 ${\rm Comp}(C,p_1,\dots,p_m,f)$.
In particular, if $f:(p_1,\dots,p_m\in C)\to X$
is defined over $K$ then ${\rm Comp}(C,p_1,\dots,p_m,f)$
  is geometrically irreducible.
\item There is a dense open subset
$$
\smooth(C,p_1,\dots,p_m, f)\subset {\rm Comp}(C,p_1,\dots,p_m,f)
$$
which parametrizes free maps of smooth rational curves, that is
$$
\smooth( C, p_1,\dots,p_m, f)
\subset\Hom^{\rm free}(\PP^1, X, p_i\mapsto f(p_i))
$$
\end{enumerate}
\end{theorem}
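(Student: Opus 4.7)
The plan is to derive everything from a single smoothness statement: the vanishing $H^1(C, f^*T_X(-p_1 - \cdots - p_m)) = 0$ implies that $\overline M_{0,m}(X, p_i \mapsto f(p_i))$ is smooth at the point $[C, p_1, \ldots, p_m, f]$. First I set up the deformation theory of the stable map, keeping the target points $f(p_i) \in X$ fixed. There are two sorts of first-order deformations to track: (a) deformations of $f$ with the pointed nodal source $(C, \{p_i\})$ held fixed, which are parametrized by $H^0(C, f^*T_X(-\sum p_i))$ with obstructions lying in $H^1(C, f^*T_X(-\sum p_i))$; and (b) deformations of the pointed nodal curve $(C, \{p_i\})$ itself, which at each node contribute one independent smoothing parameter. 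Because $X$ is smooth, the obstruction to lifting an infinitesimal smoothing of a node to a deformation of $f$ also lies in this same $H^1$; concretely one sees this by comparing $f^*T_X$ with $\nu_*\nu^*f^*T_X$ via the normalization exact sequence at each node and tracking where the smoothing obstruction lands. By hypothesis the total obstruction space vanishes, so the versal deformation space at $[C, p_1, \ldots, p_m, f]$ is formally smooth.

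Formal smoothness at once gives (1) and (2): a smooth point lies on a unique irreducible component, so $\mathrm{Comp}(C, p_1,\ldots,p_m,f)$ is well-defined and $[C, p_1,\ldots,p_m,f]$ is a smooth point of it; and if the map is defined over $K$, then uniqueness of the component over $\Kbar$ forces it to be Galois-stable, hence geometrically irreducible. For (3), the node-smoothing parameters from (b) cut out transverse coordinate directions on $\mathrm{Comp}$, so the sublocus where every node of the source has been smoothed is dense and Zariski open. Since $p_a(C) = 0$, smoothing all nodes produces a smooth genus-zero curve; over $\Kbar$ this is $\Pone$, and when working over $K$ the marked points supply $K$-rational points, so the smooth fibre is $\Pone_K$. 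Thus the generic member of the component is an honest map $\Pone \to X$ sending $p_i$ to $f(p_i)$.

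To upgrade \emph{smooth source} to \emph{free map}, I invoke upper semicontinuity of $h^1$ in flat families: for $[f_t:\Pone \to X]$ close to $[f]$ in $\mathrm{Comp}$ one has $h^1(\Pone, f_t^*T_X(-\sum p_i(t))) \leq h^1(C, f^*T_X(-\sum p_i)) = 0$, and on $\Pone$ this forces every Grothendieck summand of $f_t^*T_X(-\sum p_i(t))$ to have degree $\geq -1$, equivalently $f_t^*T_X = \bigoplus \OO(a_i)$ with $a_i \geq m-1 \geq 0$, i.e.\ $f_t$ is free. The main obstacle I expect is the deformation-theoretic verification at step (a)/(b): one has to check carefully that the smoothing of a node contributes no obstruction beyond those already killed by the vanishing of $H^1(C, f^*T_X(-\sum p_i))$. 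The cleanest route is the perfect obstruction theory of $\overline M_{0,m}(X)$ due to Behrend-Fantechi, whose obstruction sheaf at $[f]$ is precisely this $H^1$; a more hands-on approach works directly with the cotangent complex of $(C, \{p_i\}) \to X$ and the local analytic description of nodal singularities, matching the treatment in \cite[Sec.II.7]{rc-book}.
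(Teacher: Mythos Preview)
The paper does not actually prove this theorem; it is stated with references to \cite[Sec.II.7]{rc-book} and \cite{ar-ko}, followed only by a parenthetical caveat that the smoothness in part (2) is in general only stack-smoothness. Your sketch is the standard argument from those sources and is essentially correct: the vanishing of $H^1$ kills the obstructions both to deforming $f$ with fixed source and to lifting a smoothing of each node, giving formal smoothness of the moduli space at $[C,p_1,\dots,p_m,f]$; a smooth point lies on a unique component, which is then Galois-stable and hence geometrically irreducible; the node-smoothing parameters are transverse coordinates, so the smooth-source locus is open and dense; and upper semicontinuity of $h^1$ forces the vanishing to persist on nearby smooth fibres, which on $\Pone$ means every summand of $f_t^*T_X$ has degree $\geq m-1$, i.e.\ $f_t$ is free.

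Two small remarks. First, your freeness step uses $m-1 \geq 0$, so tacitly $m \geq 1$; this is harmless since every application in the paper has at least one marked point, but it is worth flagging. Second, you should acknowledge, as the paper's parenthetical does, that when $(C,p_1,\dots,p_m,f)$ has nontrivial automorphisms the smoothness is only in the stack (or orbifold) sense; for the combs actually used later the automorphism group is trivial, so the coarse moduli space is genuinely smooth there.
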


(We cheat a little in (\ref{m0mbar.thm}.2). In general
$[C,p_1,\dots,p_m,f]$ is smooth only in the stack sense;
this is all one needs.
Moreover,
in all our applications $[C,p,f]$ will be a smooth point.)

The required vanishing is usually easy to check using the following.

\begin{exercise}\label{vanish.lem} Let $C=C_1+\cdots+C_m$
 be a reduced, proper  curve with arithmetic genus $0$
and $p\in C$ a smooth point.
Let $C_1,\cdots ,C_m$ be its irreducible components over $\bar K$.
Let $E$ be a vector bundle on $C$
and assume that $H^1(C_i, E|_{C_i}(-1))=0$ for every $i$.
Then  $H^1(C,E(-p))=0$.

In particular,  if $f:C\to X$ is a morphism to a smooth variety
and if each $f|_{C_i}$ is free then  
 $H^1(C, f^*T_X(-p))=0$.
\end{exercise}

\begin{definition}[Combs]\label{comb.say}

A  {\it comb} assembled from a curve 
$B$ (the handle) and $m$ curves $C_i$ (the teeth) 
 attached
 at the distinct points $b_1,\dots, b_m\in B$
and $c_i\in C_i$   is a curve    obtained from
the disjoint union of $B$ and of the $C_i$
by identifying the points $b_i\in B$ and $c_i\in C_i$.
In these notes we only deal with the case when $B$ and the $C_i$
 are smooth, rational.

A comb can be pictured as below:
$$
\begin{array}{c}
\begin{picture}(100,100)(40,-70)
\put(0,0){\line(1,0){180}}
\put(20,10){\line(0,-1){60}}
\put(50,10){\line(0,-1){60}}
\put(160,10){\line(0,-1){60}}
\put(10,5){$b_1$}
\put(40,5){$b_2$}
\put(150,5){$b_m$}

\put(20,0){\circle*{3}}
\put(50,0){\circle*{3}}
\put(160,0){\circle*{3}}

\put(15,-60){$C_1$}
\put(45,-60){$C_2$}
\put(155,-60){$C_m$}

\put(-25,-5){$B$}

\put(90,-30){$\cdots\cdots$}

\end{picture}\\
\mbox{Comb with $m$-teeth}
\end{array}
$$
Assume now that we have 
a Galois extension $L/K$ and
$g_i:(0\in \Pone)\to (x\in X)$, a conjugation invariant set of maps
 defined over $L$.

We can view this collection as just one map as follows.
The maps $[g_i]\in \Hom(\Pone, X)$ form a
0-dimensional reduced $K$-scheme $Z$. Then the
$g_i$ glue together to a single map
$$
G: Z\times (0\in \Pone)=(Z\subset \Pone_Z)\to (x\in X).
$$

Let $j:Z\into \Pone_K$ be an embedding. 
We can then assemble a comb with handle $\Pone_K$
and teeth $\Pone_Z$. Let us denote it by
$$
\comb(g_1,\dots , g_m).
$$
(The role of $j$ is suppressed, it will not be important for us.)

If $K$ is infinite, an embedding  $j:Z\into \Pone_K$
always exists. If $K$ is finite, then $Z$ may have too many
points, but an embedding exists whenever $Z$ is irreducible over $K$.

Indeed, in this case $Z=\spec_KK(a)$ for some $a\in \bar K$.
Thus $K[t]\to K(a)$ gives an embedding $Z\into \AA^1_K$.
\end{definition}

Everything is now ready to obtain
rational curves through 1 point.

\begin{corollary}\cite{k-loc}\label{k-loc.cor}
  Given a separably rationally connected variety $X$ defined over 
a local field $K=\QQ_p$ or $K=\FF_q((t))$,
  there is a rational curve defined over $K$ through any $x \in X(K)$.
\end{corollary}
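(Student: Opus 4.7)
The plan is to construct a reducible genus $0$ stable map to $X$ defined over $K$, recognize it as a smooth $K$-point of the appropriate moduli space of stable maps, and then invoke the analytic inverse function theorem for local fields (cf.\ \ref{plan.say}) to deform it into an honest smooth rational curve over $K$ through $x$.

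First I would use condition \ref{src.cond}.\ref{it:tangent_positive} applied to $X_{\Kbar}$ with $Z=\{0\}\subset\Pone$ and $0\mapsto x$ to produce a very free morphism $h:\Pone\to X_{\Kbar}$ with $h(0)=x$; after enlarging to the Galois closure, I may assume $h$ is defined over some finite Galois extension $L/K$. If $L=K$ we are done, so assume $m:=[L:K]\geq 2$ and let $h_1,\dots,h_m$ be the $\Gal(L/K)$-conjugates of $h$. Each $h_i$ is very free and satisfies $h_i(0)=x$, because $x\in X(K)$ is Galois-fixed. Now assemble the comb $C=\comb(h_1,\dots,h_m)$ of \ref{comb.say}, with handle $B\cong\Pone_K$ and teeth carrying the $h_i$; choose the attachment points $b_1,\dots,b_m\in B(K)$ distinct from one another and from an additional marked point $p\in B(K)$ (possible since $K$ is infinite). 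Define $f:C\to X$ by declaring $f|_B$ to be the constant map to $x$ and $f$ on the $i$-th tooth to be $h_i$; this is consistent at the nodes and defined over $K$. Then $(C,p,f)$ is a pointed stable genus $0$ map to $X$: the contracted handle carries $m+1\geq 3$ special points, and every tooth is non-contracted.

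Next I would verify the hypothesis $H^1(C,f^*T_X(-p))=0$ of Theorem \ref{m0mbar.thm} by means of Exercise \ref{vanish.lem}. On the handle, $f|_B$ is constant, so $f^*T_X|_B\cong\OO_B^{\dim X}$ is trivial, hence free. On each tooth, very freeness of $h_i$ forces $h_i^*T_X$ to split into line bundles of degree $\geq 2$. In both cases the twist by $\OO(-1)$ has only nonnegative degrees, so $H^1$ vanishes on each component and therefore on $C$. Theorem \ref{m0mbar.thm} then tells us that $[C,p,f]$ is a smooth $K$-point of a geometrically irreducible component $U:={\rm Comp}(C,p,f)\subset\overline{M}_{0,1}(X,p\mapsto x)$, and that a dense Zariski-open subset $\smooth(C,p,f)\subset U$ parametrizes free morphisms from smooth $\Pone$'s that send $p$ to $x$.

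Finally, because $K=\QQ_p$ or $K=\FF_q((t))$ is a local field, the smooth $K$-point $[C,p,f]\in U(K)$ admits an analytic neighborhood in $U(K)$ biholomorphic to an open $K$-ball; as explained in \ref{plan.say}, such a neighborhood meets every nonempty Zariski-open subset of $U$, in particular $\smooth(C,p,f)$. Any $K$-point of the intersection is a free morphism $\Phi:\Pone_K\to X$ with $\Phi(p)=x$, producing the desired rational curve over $K$ through $x$. The main obstacle is the descent of the entire configuration to $K$, which is precisely the reason one must work with the full Galois orbit of very free curves through $x$ glued into a single comb rather than with any individual tooth; once descent is in place, the $H^1$-vanishing and the local-field smoothing are essentially formal.
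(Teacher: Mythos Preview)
Your overall strategy is exactly the paper's: assemble the Galois conjugates of a free (or very free) curve through $x$ into a $K$-comb, check the $H^1$-vanishing via Exercise~\ref{vanish.lem}, apply Theorem~\ref{m0mbar.thm}, and then use the analytic inverse function theorem over the local field $K$ to find a $K$-point of $\smooth(C,p,f)$.

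There is, however, a real gap in your descent step. You take the attachment points $b_1,\dots,b_m$ to lie in $B(K)$. With that choice the curve $C$ is indeed defined over $K$, but the map $f:C\to X$ is \emph{not}. Each $\sigma\in\Gal(L/K)$ fixes every $b_i$ and hence fixes each tooth $T_i$ as a subscheme of $C$; but $\sigma$ sends $f|_{T_i}=h_i$ to its conjugate $h_{\sigma(i)}$, so $f^\sigma\neq f$ as soon as the Galois action on the $h_i$ is nontrivial. Your claim ``defined over $K$'' therefore fails.

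The fix is precisely the comb construction of (\ref{comb.say}) that you cite but do not follow. The conjugates $h_1,\dots,h_m$ are the $\bar K$-points of a reduced $0$-dimensional $K$-scheme $Z$ (e.g.\ $Z=\Spec_K L$), and the teeth together form $\PP^1_Z$. One attaches $\PP^1_Z$ to the handle via a $K$-embedding $j:Z\hookrightarrow\PP^1_K$; then Galois permutes the attachment points and the teeth in the \emph{same} way, so both $C$ and $f$ descend to $K$. The marked point $p$ is then any $K$-point of the handle away from $j(Z)$, which exists since $K$ is infinite. With this correction your verification of $H^1(C,f^*T_X(-p))=0$ and the remainder of the argument go through unchanged and agree with the paper's proof.
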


Proof. Given $x \in X(K)$,  pick a free curve
$g_1:(0\in \Pone)\to (x\in X)$ over $\bar K$ with conjugates $g_2, \dots,
g_m$. As in (\ref{comb.say}), assemble a $K$-comb
$$
f:(0\in \comb(g_1,\dots , g_m))\to (x\in X).
$$
Using (\ref{m0mbar.thm}), 
we obtain
$$
\smooth(C,0, f)\subset\Hom^{\rm free}(\PP^1, X, 0\mapsto f(p))
$$
and by (\ref{m0mbar.thm}.2)
we see that  (\ref{plan.say}.2.a)
applies. Hence  we get $K$-points in $\smooth(p\in C, f)$, as required.
\qed
\medskip

The finite field case, corresponding to
(\ref{plan.say}.2.b), is treated in the next section.

\section{The Lang-Weil estimates} \label{lw.sec}

\begin{theorem}\cite{lan-wei}\label{lan-wei.thm}
  Over $\FF_q$, let $U \subset \PP^N$ be the difference of two subvarieties
  defined by several equations of degree at most $D$. 
If $U_0\subset U$ is a geometrically
  irreducible component, then
  \[\left|\# U_0(\FF_q)-q^{\dim U_0}\right|
  \le C(N,D) \cdot q^{\dim U_0 - \frac 1 2},\]
where the constant $C(N,D)$ depends only on $N$ and $D$.
\end{theorem}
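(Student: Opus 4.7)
The plan is to prove the theorem by induction on $r = \dim U_0$, reducing to the case of geometrically irreducible curves where the Weil bound for curves (already invoked in the proof of Proposition~\ref{prop:one_point}) provides the essential input. The first observation is that B\'ezout bounds the degree of $U_0$ purely in terms of $N, D$: since $U_0$ is an irreducible component of a variety cut out by equations of degree $\leq D$ in $\PP^N$, we have $\deg U_0 \leq D^{N-r}$. Call this $d = d(N,D)$. Throughout, all auxiliary quantities---genera, numbers of singularities, sizes of bad loci---will depend only on $N, D$ through $d$.

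For the base case $r=1$, $U_0$ is a geometrically irreducible curve of degree $\leq d$. Let $\nu : \widetilde{U_0} \to U_0$ be the normalization. The geometric genus $g$ of $\widetilde{U_0}$ is bounded by the arithmetic genus of $U_0$, and Castelnuovo's inequality bounds the latter by some $G(N,D)$. The Weil bound for smooth projective curves gives
\[\bigl|\#\widetilde{U_0}(\FF_q) - (q+1)\bigr| \leq 2g\sqrt{q} \leq 2G(N,D)\sqrt{q},\]
and $\nu$ alters point counts by at most the number of singularities of $U_0$, itself bounded in terms of $N, D$. This yields the dimension-one case.

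For the inductive step ($r \geq 2$), I would cut $U_0$ by a pencil of hyperplanes $\{H_t\}_{t \in \PP^1_{\FF_q}}$ whose base locus $\Lambda$ has codimension $2$ in $\PP^N$. Every point of $\PP^N \setminus \Lambda$ lies on a unique $H_t$, so
\[\#U_0(\FF_q) = \#(U_0 \cap \Lambda)(\FF_q) + \sum_{t \in \PP^1(\FF_q)} \#\bigl((U_0 \cap H_t) \setminus \Lambda\bigr)(\FF_q).\]
A uniform Bertini-type statement ensures that for all but a bounded number $B(N,D)$ of values of $t$, the section $U_0 \cap H_t$ is geometrically irreducible of dimension $r-1$ and degree $\leq d$. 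Applying the inductive hypothesis to each such good fiber gives $q^{r-1} + O(q^{r-3/2})$ points; summing over the $q+1$ rational values of $t$ produces the main term $q^r$ with error of order $q^{r-1/2}$. The bad fibers, together with the contribution of $U_0 \cap \Lambda$ (of dimension $r-2$, hence at most $O(q^{r-2})$ points by a crude counting argument), contribute $O(q^{r-1})$, which is absorbed into the error. For small $q$ one simply chooses $C(N,D)$ large enough to make the estimate trivially valid.

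The main obstacle is the uniform Bertini-type input: one needs that, for a pencil of hyperplanes defined over $\FF_q$, all but at most $B(N,D)$ of the fibers $U_0 \cap H_t$ are geometrically irreducible. Classical Bertini supplies geometric irreducibility for the \emph{generic} $H$ over $\FFqbar$, and the crux is to extract a quantitative statement over the rational parameter space: the locus of $t \in \PP^1$ where geometric irreducibility fails forms a proper closed subvariety whose degree is controlled purely by $d$, so at most $B(N,D)$ of its points are $\FF_q$-rational. Tracking this dependence carefully---ensuring every implicit constant depends only on $N, D$, not on $U_0$ itself---is the technical heart of the argument and the source of the uniformity asserted in the statement.
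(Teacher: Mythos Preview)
Your proposal is essentially a sketch of the original Lang--Weil proof itself: induct on dimension, use the Weil bound for curves as the base case, and slice by a pencil of hyperplanes for the inductive step, with a uniform Bertini-type bound on the number of bad fibers. This is correct in outline, and you have identified the genuine technical crux (uniform control on the locus of reducible hyperplane sections).

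The paper, by contrast, does not reprove Lang--Weil at all. Its ``notes on the proof'' simply cite the original result of \cite{lan-wei}, which is stated for projective varieties with a bound in terms of $\deg U_0$, and then carries out two easy reductions: first, B\'ezout gives $\deg U_0 \leq D^{N-\dim U_0}$, converting the degree bound into a bound depending only on $N,D$; second, to pass from the closure $\bar U_0$ to the open set $U_0$, one needs only a crude upper bound on $\#(\bar U_0\setminus U_0)(\FF_q)$, supplied by the exercise that follows (project to $\PP^r$ and use that a degree-$d$ finite map has fibers of size $\leq d$). So the paper's argument is a short reduction to the literature, while yours is a self-contained reconstruction of the cited theorem. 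Your approach buys independence from the reference and exposes where the Weil bound for curves enters; the paper's approach is far shorter and appropriate for lecture notes that only need the statement as a black box.

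One small point you gloss over: your induction is phrased in terms of $(N,D)$, but the hyperplane section $U_0\cap H_t$ is naturally controlled by its degree $d$, not by an equation-degree bound $D$ in $H_t\cong\PP^{N-1}$. The fix is standard---run the induction with degree as the parameter, then invoke B\'ezout at the end---but it is worth making explicit.
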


{\it Notes on the proof.} The original form of the estimate in \cite{lan-wei}
assumes that $U_0$ is projective and it uses
$\deg U_0$ instead of $D$. These are, however, minor changes.

First, if $V\subset \PP^N$ is an  irreducible component
of $W$ which is defined by equations of degree at most $D$,
then it is also an  irreducible component
of some $W'\supset W$ which is defined by 
$N-\dim V$ equations of degree at most $D$.
Thus, by B\'ezout's theorem,  $\deg V\leq D^{N-\dim V}$.

Thus we have a bound  required for  $\# \bar U_0(\FF_q)$
and we need an upper bound for the complement
$\# (\bar U_0\setminus U_0)(\FF_q)$. We assumed that
$\bar U_0\setminus U_0$ is also defined by equations of degree at most $D$.
A slight problem is, however, that it may have components which are
geometrically reducible.  Fortunately, an upper bound for
$\# V(\FF_q)$ is easy to get.\qed

\begin{exercise} Let $V\subset \PP^N$ be a closed, reduced subscheme of
pure dimension $r$ and degree $d$.
Show that if $q\geq d$ then $V$ does not contain
$\PP^N(\FF_q)$. Use this to show that there is a projection
$\pi:V\to \PP^r$ defined over $\FF_q$
which is finite of degree $d$.
Conclude from this that
$$
\# V(\FF_q)\leq d\cdot \# \PP^r(\FF_q)=d\cdot (q^r+\cdots+q+1).
$$
\end{exercise}

\begin{say}[Application to $\Hom_d(\Pone, X)$]\label{lw.appl.say}
We are looking for rational curves of degree $d$
on a hypersurface $X\subset \PP^{n+1}$ of degree $m$.
We saw in (\ref{sp.of.rc.say}) that 
 $\Hom_d(\Pone, X)$ lies in $\PP^{(n+2)(d+1)-1}$
(hence we can take $N=(n+2)(d+1)-1$) and its closure is defined by
equations of degree $m$. 

The complement of $\Hom_d(\Pone, X)$ in its closure
consists of those 
$(f_0, \dots, f_N)$ with a common zero.
One can get explicit equations for this locus
 as follows.
Pick indeterminates $\lambda_i, \mu_j$. Then
$f_0, \dots, f_N$ have a common zero iff the resultant
$$
\mbox{Res}(\textstyle{\sum_i} \lambda_i f_i,\textstyle{\sum_j} \mu_j f_j)
$$
is identically zero as a polynomial in the $\lambda_i, \mu_j$.
This gives equations of degree $2d$ in the coefficients of the $f_i$.
Thus we can choose $D=\max\{m,2d\}$.

Finally, 
where do we find a geometrically
  irreducible component of the space  $\Hom_d(\Pone, X)$?
Here again a smooth point $[f]$  in a suitable
compactification
of $\Hom(\Pone, X)$ gives the answer by (\ref{smooth.pt.exrc}).
Similar considerations show that our methods
also apply to  $\Hom_d(\Pone, X, 0\mapsto p)$.
\end{say}

\begin{exercise} \label{smooth.pt.exrc}
 Let $W$ be a $K$-variety and $p\in W$ a smooth point.
Then there is a unique $K$-irreducible component $W_p\subset K$
which contains $p$ and $W_p$ is also geometrically
  irreducible if either $p$ is $K$-point
or $K$ is algebraically closed in $K(p)$.
\end{exercise}

As a first application, let us consider cubic surfaces.

\begin{example}[Cubic surfaces]\label{cubic.surf.1pt.exmp}

Consider a cubic surface $S \subset \Pthree$, defined over $K=\FF_q$. 
We would like to use these results to get a rational curve through 
 any $p\in S(\FF_q)$.

We need to start with some free rational curves over
$\bar K$.

The first such possibility is to use conics.
If $L\subset S$ is a line, then
the plane spanned by $p$ and $L$ intersects $S$ in $L$ plus a
residual conic $C_L$. $C$ is a smooth and free conic,
unless $p$ lies on a line. 

In general, we get 27 conics and we conclude that
if $q$ is large enough, then through every point $p \in S(\FF_q)$
which is not on a line, there is
  rational curve of degree  $2\cdot 27=54$, defined over $\FF_q$.

If $p$ lies on 1 (resp.\  2) lines, then we get only
16 (resp.\ 7) smooth conics, and so we get even lower
degree rational curves.

However, when  $p$ lies on 3 lines (these are called Eckart points)
then there is no smooth conic through $p$.

Let us next try twisted cubics.
As we saw in (\ref{ex:cubic_curves}), we get
twisted cubics from a morphism $S\to \Ptwo$
as the birational transforms of lines not passing through any
of the 6 blown up points.
Thus we get a 2-dimensional family of twisted cubics
whenever $p$ is not on one the 6 lines
contracted by $S\to \Ptwo$.

If $p$ lies on 0 (resp.\ 1, 2, 3) lines,
we get $72$ (resp.\ $72-16$,  $72-2\cdot 16$, 
 $72-3\cdot 16$) such families. 

Hence we obtain that for every $p\in S(\FF_q)$, the space
$\Hom_d(\Pone, X, 0\mapsto p)$ has a geometrically irreducible
component for some $d\leq 3\cdot 72=216$.

As in (\ref{lw.appl.say}), we conclude that
 if $q$ is large enough, then through every point 
$p \in S(\FF_q)$, there is
  rational curve of degree at most $216$, defined over $\FF_q$.
\end{example}

\begin{example}[Cubic hypersurfaces]

Consider a smooth cubic hypersurface 
$X^n \subset \PP^{n+1}$, defined over $K=\FF_q$
and let $p\in X(\FF_q)$ be a point.

If $p$ lies on a smooth cubic surface section $S\subset X$,
then we can assemble a $K$-comb of degree $\leq 216$
and, as before,  we can use it to get rational curves
through $p$.

Over a finite field, however, there is no guarantee that
$X$ has any smooth cubic surface sections. What can we do then?

We can use a generic cubic surface section through $p$.
This is then defined over a field extension
$L=K(y_1, \dots, y_s)$ where the $y_i$ are algebraically
independent over $K$. By the previous considerations we
can assemble an $L$-comb and conclude that 
$\Hom_d(\Pone, X, 0\mapsto p)$ has a  smooth $L$-point
for some $d\leq 3\cdot 72=216$.

By (\ref{smooth.pt.exrc}), this implies that it also has a 
geometrically irreducible
component, and we can then finish as before.
\end{example}

It is now clear that the methods of this section together with
(\ref{low.deg.rtl.curves}) imply the following:

\begin{theorem}\label{one.pt.main.thm}
 Let  $X\subset \PP^{n+1}$ be a smooth SRC
hypersurface of degree $m\leq n+1$ defined over a finite field $\FF_q$.
Then there is a $C(n)$ such that if $q> C(n)$ then
through every point in $X(\FF_q)$ there is a rational curve
defined over $\FF_q$.\qed
\end{theorem}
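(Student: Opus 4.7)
The plan is to rephrase the problem as finding an $\FF_q$-point of a geometrically irreducible component of the Hom scheme $\Hom_d(\Pone, X, 0 \mapsto p)$ for some bounded $d$ depending only on $n$, and then apply the Lang--Weil estimates (\ref{lan-wei.thm}). Concretely, one wants to exhibit a stable genus-0 curve $f : C \to X$ defined over $\FF_q$ with a marked $\FF_q$-point $p_1 \in C$ satisfying $f(p_1) = p$, such that $[C, p_1, f]$ is a smooth $\FF_q$-point of $\overline{M}_{0,1}(X, p_1 \mapsto p)$. Theorem~\ref{m0mbar.thm} together with Exercise~\ref{smooth.pt.exrc} will then supply a geometrically irreducible $\FF_q$-component whose dense open locus $\smooth(C, p_1, f)$ parameterizes honest free maps $\Pone \to X$ sending $0$ to $p$.

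To build such an $f$, first use that $X_{\FFqbar}$ is SRC to pick a free rational curve $h : \Pone \to X_{\FFqbar}$ with $h(0) = p$, whose degree can be bounded by a constant depending only on $n$ via the effective bounds of (\ref{low.deg.rtl.curves}). If $p$ is too special for those bounds to apply directly, follow the pattern of the cubic-hypersurface example above: replace $X$ by a generic linear section through $p$ defined over a purely transcendental extension $L_0 = \FF_q(y_1, \dots, y_s)$, on which $p$ becomes a general point, and find $h$ there. Let $L$ be the (algebraic) field of definition of $h$, and let $h_1 = h, h_2, \dots, h_m$ be its conjugates. Using the construction of (\ref{comb.say}), assemble the $h_i$ as teeth of a $\FF_q$-comb with handle $\Pone_{\FF_q}$, marking a smooth $\FF_q$-point $p_1$ on the handle with $f(p_1) = p$. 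Each tooth is free, so Exercise~\ref{vanish.lem} gives $H^1(C, f^*T_X(-p_1)) = 0$, and Theorem~\ref{m0mbar.thm} then produces a smooth $\FF_q$-point $[C, p_1, f]$ lying in a geometrically irreducible component of $\overline{M}_{0,1}(X, p_1 \mapsto p)$ whose general member is a free $\Pone$-map of some degree $d \leq d(n)$.

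As summarized in (\ref{lw.appl.say}), the relevant stratum sits inside $\Hom_d(\Pone, X, 0 \mapsto p) \subset \PP^N$ with $N = (n+2)(d+1) - 1$ and defining equations of degree at most $\max\{m, 2d\}$, quantities that depend only on $n$. Theorem~\ref{lan-wei.thm} then supplies a constant $C(n)$ such that for $q > C(n)$ the geometrically irreducible $\FF_q$-component has an $\FF_q$-point, giving a genuine rational curve $\Pone_{\FF_q} \to X$ through $p$. The principal obstacle is the very first step: securing a degree bound on $h$ that is uniform in $p$. The effective SRC bounds in (\ref{low.deg.rtl.curves}) are stated only for general points, so the crux is verifying that the generic-section reduction is robust enough to handle arbitrary special $\FF_q$-points (Eckart points, points on distinguished subvarieties, etc.) while still yielding a degree bound depending only on $n$; once this is done, the deformation-theoretic and Lang--Weil machinery is insensitive to the auxiliary field $L_0 \cdot L$ over which the initial curve is defined.
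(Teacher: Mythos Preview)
Your proposal is essentially the paper's own argument: the theorem is stated with a \qed{} as an immediate consequence of ``the methods of this section together with (\ref{low.deg.rtl.curves})'', and those methods are exactly what you spell out --- build a comb over $\FF_q$ (or over a purely transcendental extension $L_0$) from Galois-conjugate free curves of bounded degree, invoke (\ref{m0mbar.thm}) and (\ref{vanish.lem}) to get a smooth point of $\overline{M}_{0,1}(X,p_1\mapsto p)$, use (\ref{smooth.pt.exrc}) to extract a geometrically irreducible $\FF_q$-component of bounded degree, and finish with Lang--Weil as in (\ref{lw.appl.say}). You have also correctly isolated the one genuine input the paper does not make fully explicit: a degree bound on the initial free curve $h$ that is uniform in the point $p$, and your generic-section device is precisely the mechanism the paper uses (in the cubic example immediately preceding the theorem) to manufacture the required smooth $L_0$-point when $p$ is special.
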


\begin{exercise} \label{becomes.onto.exrc}
Prove the following consequence of (\ref{lan-wei.thm}):

Let $f:U\to W$ be a dominant morphism over $\FF_q$. Assume that  $W$
and the generic fiber of $f$ are both 
geometrically irreducible. Then there is a dense open set
$W^0$ such that 
$f\bigl(U(\FF_{q^m})\bigr)\supset W^0(\FF_{q^m})$ for $m\gg 1$.
\end{exercise}

\section{Rational curves through two  points and\newline
Lefschetz-type theorems}\label{2pt.sec}

\begin{say}[How not to find rational curves through two points]

Let us see what happens if we try to follow the method of (\ref{k-loc.cor})
for 2 points.
Assume that  over $\Kbar$  we
have a rational curve $C_1$ through $p, p'$. Then $C_1$ 
 is already defined
 over a finite Galois extension $K'$ of $K$. As
before, consider its conjugates of $C_2, \dots, C_m$  under
$G:=\Gal(K'/K)$, and attach copies $C'_1, \dots, C'_m$  to \emph{two}
copies of $\Pone$, one over $p$ and one over $p'$.
 This results in a curve $Y_0$ which is  defined over
$K$ and may be deformed to a smooth curve $Y_\varepsilon$,
still passing  through $p,p'$.

The problem is that although all the $\Kbar$-irreducible components of
$Y_0$ are rational, it has arithmetic genus $m-1$, hence the
smooth curve $Y_\varepsilon$ has genus $m-1$.

  Note that finding curves of higher genus through $p, p'$ is not very
  interesting. Such a curve can easily be obtained by taking the intersection
  of $X$ with hyperplanes through $p, p'$.

In fact, no other choice of $Y_0$ would work, as shown by the next exercise.
\end{say}

\begin{exercise}  Let $C$ be a reduced, proper, connected  curve
of arithmetic genus $0$ 
defined over $K$. 
Let $p\neq  p'\in C(K)$ be 2 points.
Then there is a closed sub-curve $p,p'\in C'\subset C$ such that
$C'$ is connected and every $K$-irreducible component of $C'$
is isomorphic to $\PP^1_K$.
\end{exercise}

In this section we first connect the existence of
rational curves through two points with Lefschetz-type results
about the fundamental groups of open subsets of $X$ and then use this
connection to find such rational curves in certain cases.

\begin{definition}\label{leff.cond.defn}
Let $K$ be a field,
 $X$  a normal, projective variety
and
$$
\begin{array}{ccc}
C_U & \stackrel{\phi}{\to} & X\\
\pi\downarrow\uparrow s && \\
U &&
\end{array}\eqno{(\ref{leff.cond.defn}.1)}
$$
a  smooth
family of reduced, proper, connected  curves mapping to  $X$
with a section $s$. 
For $x\in X$, set $U_{s\to x}:=s^{-1}\phi^{-1}(x)$, parametrizing
those maps that send the marked point to $x$,
and
$$
\begin{array}{ccc}
C_{U_{s\to x}} & \stackrel{\phi_x}{\to} & X\\
\pi_x\downarrow\uparrow s_x && \\
U_{s\to x} &&
\end{array}\eqno{(\ref{leff.cond.defn}.2)}
$$
the corresponding family.

We say that  the family (\ref{leff.cond.defn}.1) satisfies the 
 {\it Lefschetz condition}
if, for general $x\in X(\bar K)$, 
the map  $\phi_x$ is dominant with 
geometrically irreducible generic fiber.

Sometimes it is more convenient to give just
$$
U\stackrel{\pi}{\leftarrow} C_U\stackrel{\phi}{\to} S,
\eqno{(\ref{leff.cond.defn}.3)}
$$
without specifying the section $s:U\to C_U$. In this case,
we consider the family obtained from the universal section. That is,
$$
\begin{array}{ccc}
C_U\times_U C_U & \stackrel{\phi_1}{\to} & X\\
\pi_2\downarrow\uparrow s_1 && \\
C_U &&
\end{array}\eqno{(\ref{leff.cond.defn}.4)}
$$
where $\pi_2(c,c')=c', \phi_1(c,c')=\phi(c)$ and
$s_1(c)=(c,c)$.

If $x\in X$ then  $U_x=\phi^{-1}(x)=s_1^{-1}\phi_1^{-1}(x)$
is the set of triples $(C,c, \phi|_C)$ where
$C$ is a fiber of $\pi$ and $c$ a point of $C$ such that
$\phi(c)=x$.

Similarly, if $x,x'\in X$ then  
$U_{x,x'}:=\phi^{-1}(x)\times_U \phi^{-1}(x') $
is the set of all $(C,c,c', \phi|_C)$ where
$C$ is a fiber of $\pi$ and $c,c'$  points of $C$ such that
$\phi(c)=x$ and $\phi(c')=x'$. Informally
(and somewhat imprecisely) 
$U_{x,x'}$ is the family of curves in $U$ that pass through
both $x$ and $x'$.

Thus the family (\ref{leff.cond.defn}.4) 
satisfies the  Lefschetz condition iff
$(C_U)_{x,x'}$ is  geometrically irreducible for
general $x,x'\in X(\bar K)$.
\end{definition}

\begin{exercise}[Stein factorization] \label{stein.exrc}
Let $g:U\to V$ be a morphism between irreducible and normal varieties.
Then $g$ can be factored as
$$
g: U \stackrel{c}{\to} W\stackrel{h}{\to}V
$$
where $W$ is normal, $h$ is finite and generically \'etale 
and there is an open and dense subset
$W^0$ such that 
$c^{-1}(w)$ is geometrically irreducible for every $w\in W^0$.

Thus $g:U\to V$ is dominant with 
geometrically irreducible generic fiber iff
$g$ can not be factored through a
nontrivial finite and generically \'etale  map
$W\to V$.
\end{exercise}

The Appendix explains how the Lefschetz condition
connects with the Lefschetz theorems on fundamental groups
of hyperplane sections. For now let us   prove that
 a family satisfying  the  Lefschetz condition
leads to rational curves through 2 points.

\begin{example}\label{leff.for.3surf}
 Let $S\subset \PP^3$ be a smooth cubic surface.
Let $U\leftarrow C_U\to S$ be
 the family of rational hyperplane sections.
Note that $C_U\to S$ is dominant with 
geometrically irreducible generic fiber.
Furthermore, for general $p\in S(\bar K)$, 
the map  $\phi_p$ is dominant, generically finite and has degree 12.

On the other hand, let $U$ be an irreducible family of
twisted cubics on $S$. Then $U$ satisfies the  Lefschetz condition.
As discussed in (\ref{ex:cubic_curves}), 
 $U$ corresponds to the family
of lines in $\PP^2$ not passing through the 6 blown-up points.
Thus $U_x$ consists of lines in $\PP^2$ through $x$,
hence $\phi_x: C_{U_x}\to S$ is birational. Thus it cannot factor
through a nontrivial finite cover.
\end{example}

\begin{theorem}\label{2pt.curve.leff.thm} 
Let $X$ be a smooth projective variety over $\FF_q$.
Let $U\subset \Hom^{\rm free}(\Pone, X)$ be a geometrically
irreducible smooth subset, closed under ${\rm Aut}(\Pone)$.
Assume that 
$U\stackrel{\pi}{\leftarrow} U\times \Pone\stackrel{\phi}{\to} X$
satisfies the   Lefschetz condition.

Then there is an open subset $Y^0\subset X\times X$ such that
for $m\gg 1$ and $(x,x')\in Y^0(\FF_{q^m})$
there is a point $u\in U(\FF_{q^m})$ giving a rational curve
$$
\phi_u:\Pone\to X \quad\mbox{such that}\quad \phi_u(0)=x,\ \phi_u(\infty)=x'.
$$
\end{theorem}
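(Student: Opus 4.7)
I plan to apply Exercise~\ref{becomes.onto.exrc} to the two-point evaluation morphism
\[
\tau \colon U \longrightarrow X \times X, \qquad u \longmapsto (\phi_u(0),\, \phi_u(\infty)).
\]
Once I verify that $\tau$ is dominant with geometrically irreducible generic fiber, that exercise produces a dense open $Y^0 \subset X \times X$ with $\tau\bigl(U(\FF_{q^m})\bigr) \supseteq Y^0(\FF_{q^m})$ for all $m \gg 1$; any $u \in U(\FF_{q^m})$ lying over $(x,x')$ then furnishes a rational curve $\phi_u \colon \Pone \to X$ with $\phi_u(0)=x$ and $\phi_u(\infty)=x'$, as required.

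The substance of the argument is translating the Lefschetz hypothesis into these two properties of $\tau$. Introduce the auxiliary two-sided evaluation
\[
\mathrm{ev} \colon U \times (\Pone \times \Pone \setminus \Delta) \longrightarrow X \times X, \qquad (u,s,t) \longmapsto (\phi_u(s), \phi_u(t)).
\]
Unpacking Definition~\ref{leff.cond.defn} applied to $U \xleftarrow{\pi} U\times\Pone \xrightarrow{\phi} X$ via (\ref{leff.cond.defn}.4), the Lefschetz condition says precisely that $\mathrm{ev}$ is dominant with geometrically irreducible generic fiber: for generic $(x,x')$ one has $x \neq x'$, so the fiber of $\mathrm{ev}$ over $(x,x')$ coincides with the set $(C_U)_{x,x'}$ of the definition.

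Now exploit the hypothesis that $U$ is closed under reparametrization by $\mathrm{Aut}(\Pone) = \mathrm{PGL}_2$. Since $\mathrm{PGL}_2$ acts simply transitively on ordered pairs of distinct points of $\Pone$ via $\sigma \mapsto (\sigma(0), \sigma(\infty))$, combining this with the right action $u \cdot \sigma := \phi_u \circ \sigma$ on $U$ gives an isomorphism of $\FF_q$-varieties
\[
\Phi \colon \mathrm{PGL}_2 \times U \longrightarrow U \times (\Pone \times \Pone \setminus \Delta), \qquad (\sigma, u) \longmapsto (u \cdot \sigma^{-1},\, \sigma(0),\, \sigma(\infty)).
\]
The identity $\phi_{u \cdot \sigma^{-1}}(\sigma(s)) = \phi_u(s)$ shows that $\mathrm{ev} \circ \Phi = \tau \circ \mathrm{pr}_U$. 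Hence every fiber of $\mathrm{ev}$ is isomorphic to $\mathrm{PGL}_2$ times the corresponding fiber of $\tau$; because $\mathrm{PGL}_2$ is geometrically irreducible, dominance and geometric irreducibility of the generic fiber transfer between $\mathrm{ev}$ and $\tau$. Thus the Lefschetz condition yields exactly what is needed.

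Exercise~\ref{becomes.onto.exrc} applied to $\tau$, with $X \times X$ geometrically irreducible since $X$ is, then completes the proof. The main technical obstacle I anticipate is bookkeeping rather than mathematics: confirming that the $\mathrm{PGL}_2$-action on $U \subset \Hom^{\rm free}(\Pone, X)$ is algebraic, pinning down left/right conventions so that $\Phi$ really is an isomorphism, and reconciling the ``for general geometric point'' version of the Lefschetz condition with the ``generic fiber'' version needed for the exercise. No new geometric idea is required beyond the Lang--Weil-based Exercise~\ref{becomes.onto.exrc}.
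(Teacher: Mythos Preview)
Your overall strategy is sound and close to the paper's, but there is a concrete error in the $\mathrm{PGL}_2$ step. The group $\mathrm{PGL}_2$ acts simply transitively on ordered \emph{triples} of distinct points of $\Pone$, not on pairs; the stabilizer of $(0,\infty)$ is the diagonal torus $\mathbb{G}_m$. Since $\dim\mathrm{PGL}_2=3$ while $\dim(\Pone\times\Pone\setminus\Delta)=2$, your map $\Phi$ cannot be an isomorphism for dimension reasons alone. It is instead a surjection with $\mathbb{G}_m$-fibers, so the fibers of $\mathrm{ev}$ are not $\mathrm{PGL}_2\times(\text{fibers of }\tau)$ as you claim.

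Fortunately the conclusion you need survives: your identity $\mathrm{ev}\circ\Phi=\tau\circ\mathrm{pr}_U$ is correct, and since both $\Phi$ and $\mathrm{pr}_U$ are surjective with geometrically irreducible fibers ($\mathbb{G}_m$ and $\mathrm{PGL}_2$ respectively), dominance and geometric irreducibility of the generic fiber still transfer between $\mathrm{ev}$ and $\tau$. So the argument is easily repaired once you drop the word ``isomorphism.''

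For comparison, the paper avoids this bookkeeping by working with the asymmetric map $\Phi_2\colon U\times\Pone\to X\times X$, $(u,p)\mapsto(\phi_u(0),\phi_u(p))$. It shows the generic fiber of $\Phi_2$ is geometrically irreducible directly via Stein factorization: a reducible generic fiber would give a nontrivial finite cover $W\to X\times X$, and restricting to $\{x\}\times X$ would produce a nontrivial cover of $X$ through which $\phi_x$ factors, contradicting the Lefschetz condition. Lang--Weil then yields $(u,p)$ over $(x,x')$, and only at the very end is a single element of $\mathrm{Aut}(\Pone)$ used to move $p$ to $\infty$. Your route and the paper's are the same idea; the paper simply defers the use of $\mathrm{Aut}(\Pone)$-invariance to the last line, which sidesteps the dimension-count issue you ran into.
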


Proof. Set $s(u)=(u,0)$ and consider the map
$$
\Phi_2:=(\phi\circ s\circ \pi, \phi):U\times \Pone \to X\times X.
$$
Note that on $U_{s\to x}\times \Pone$ this is just $\phi_x$ followed by
the injection $X\cong \{x\}\times X\into X\times X$.

If the   generic fiber of $\Phi_2$ is
geometrically irreducible, then by
 (\ref{stein.exrc})  and (\ref{becomes.onto.exrc}),
there is an open subset $Y^0\subset X\times X$ such that
for $m\gg 1$ and
for every $(x,x')\in Y^0(\FF_{q^m})$ there is a
$(u,p)\in U(\FF_{q^m})\times \Pone(\FF_{q^m})$
such that $\Phi_2(u,p)=(x,x')$. 
This means that  $\phi_u(0)=x$ and $\phi_u(p)=x'$. 
A suitable automorphism $\gamma$ of $\Pone$ sends
$(0,\infty)$ to $(0,p)$. Thus $\phi_u\circ \gamma$
is the required rational curve.

If the generic fiber of $\Phi_2$ is
geometrically reducible, then $\Phi_2$ factors through
a nontrivial finite cover $W\to X\times X$. 
For general $x\in X$, the restriction ${\rm red} W_x\to \{x\}\times X$
is nontrivial and $U_{s\to x}\to {\rm red} W_x$ is dominant.
This is impossible by the  Lefschetz condition.\qed
\medskip

Next we discuss how to construct families that satisfy the
Lefschetz condition.

\begin{lemma} \label{leff.smoothing.ok}
Let $U\stackrel{\pi}{\leftarrow} C_U\stackrel{\phi}{\to} X$
be a smooth family of reduced, proper, generically irreducible 
 curves over $\bar K$
such that $U_x$ is irreducible for general $x\in X$.
 Let
$W\subset U$ be a locally closed smooth subset and 
$W\times \Pone \cong D_W\subset C_W$  a
subfamily. 
Let $U^0\subset U$ be an open dense  subset.
If 
$$
W\stackrel{\pi}{\leftarrow} D_W\stackrel{\phi}{\to} X
$$
satisfies the   Lefschetz condition, then so does
$$
U^0\stackrel{\pi}{\leftarrow} C_{U^0}\stackrel{\phi}{\to} X.
$$
\end{lemma}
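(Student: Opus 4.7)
The plan is to prove the contrapositive via a Stein factorization comparison. Using the universal-section formulation (\ref{leff.cond.defn}.4), the Lefschetz condition for $U \leftarrow C_U \to X$ is equivalent to the assertion that
\[
\Phi_2 : C_U \times_U C_U \to X\times X, \quad (c,c') \mapsto (\phi(c),\phi(c'))
\]
is dominant and has geometrically irreducible generic fiber; since this depends only on the generic point of $U$, it is equivalent for $U$ and for any open dense $U^0 \subset U$, so it suffices to establish Lefschetz for $C_U \to X$ itself. I first gather irreducibility data: because $U$ is irreducible and the generic fiber of $\pi$ is irreducible, $C_U$ is irreducible, hence so is $C_U \times_U C_U$; similarly $D_W \cong W \times \Pone$ gives $D_W \times_W D_W$ irreducible. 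The hypothesis that $U_x$ is irreducible for general $x$ enters quietly here, guaranteeing the formulation of Lefschetz is sensible.

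Next suppose for contradiction that Lefschetz fails for $C_U \to X$, and let $L$ be the algebraic closure of $K(X\times X)$ in $K(C_U \times_U C_U)$, so $[L:K(X\times X)] > 1$. Realizing $L$ birationally by a Stein-type factorization, I obtain
\[
C_U \times_U C_U \xrightarrow{c} Z \xrightarrow{h} X\times X,
\]
with $K(Z)=L$, $c$ having geometrically irreducible generic fiber, $h$ finite of degree $>1$, and $Z$ irreducible of dimension $2n = \dim(X\times X)$. The inclusion $D_W\times_W D_W \hookrightarrow C_U \times_U C_U$ is compatible with the projections to $X\times X$, so $\Phi_2^D$ factors as $h \circ c_D$ with $c_D$ the restriction of $c$. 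Let $Z'\subseteq Z$ be the closure of the image of $c_D$; as a continuous image of an irreducible variety, $Z'$ is irreducible. The Lefschetz hypothesis on $D_W$ gives that $\Phi_2^D$ is dominant, so $h(Z')$ is dense in $X\times X$; since $h$ is finite, $\dim Z' = 2n = \dim Z$, forcing $Z'=Z$. But then $K(Z)=L$ embeds into $K(D_W\times_W D_W)$, producing a nontrivial algebraic extension of $K(X\times X)$ inside $K(D_W\times_W D_W)$ --- contradicting the Lefschetz assumption on $D_W$, which says $K(X\times X)$ is algebraically closed in $K(D_W\times_W D_W)$.

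The main obstacle is precisely the step $Z'=Z$: one has to combine irreducibility of $D_W\times_W D_W$ (so that $Z'$ is irreducible), finiteness of $h$ (so that $\dim Z' = \dim h(Z')$), and the dominance built into the Lefschetz hypothesis for $D_W$ (so that $h(Z')$ is dense) to rule out the a priori possibility that the subfamily $D_W$ only witnesses a single ``sheet'' of a nontrivial cover $Z\to X\times X$. Without the smallness of the subfamily being controlled in this way, one could imagine $D_W$ lying inside one component of the generic fiber of $\Phi_2$ and thus being compatible with reducibility of that fiber; the dimension count kills this possibility, and the rest of the argument is essentially formal manipulation of function fields via Stein factorization.
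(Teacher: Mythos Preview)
Your argument follows essentially the same strategy as the paper's proof: assume the Lefschetz condition fails, produce a nontrivial intermediate cover via Stein factorization, restrict this factorization to the subfamily $D_W$, and derive a contradiction. The only structural difference is that you work globally with $\Phi_2 : C_U\times_U C_U \to X\times X$, whereas the paper fixes a general $x\in X$ and works with $\phi_x : C_{U_x}\to X$; by the discussion in Definition~\ref{leff.cond.defn} these are two formulations of the same condition, so this is a cosmetic difference rather than a genuinely different route.

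There is, however, a gap at the step ``$\Phi_2^D$ factors as $h\circ c_D$ with $c_D$ the restriction of $c$.'' The map $c$ is a priori only rational, and $D_W\times_W D_W$ is a closed subvariety of positive codimension in $C_U\times_U C_U$; nothing prevents it from lying entirely in the indeterminacy locus of $c$. Your later paragraph suggests that the key difficulty is the equality $Z'=Z$, but once $c_D$ exists that step is immediate; the genuine content is in producing $c_D$ at all. This is exactly the point the paper's proof addresses explicitly: one realizes the rational lift as a component $B$ of the fiber product over the finite cover, so that $B\to C_U\times_U C_U$ is finite and birational, hence an isomorphism over the normal locus. Since $W$ is smooth and $\pi$ is smooth, $D_W\times_W D_W$ sits inside this locus, and the section (equivalently, your $c$) restricts. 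You should insert this argument; after that, the rest of your proof goes through.
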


Proof.  Assume that contrary. Then there is a
nontrivial finite and generically \'etale  map
$Z\to X$ such that the restriction $\phi|_{C_{U^0_x}}: C_{U^0_x}\to X$ 
factors through $Z$. Since $U_x$ is irreducible, so is $Z$.

 Let
$g_C:C_U\times_XZ\to C_U$ be the projection. By assumption,
there is a rational section  $s:C_{U^0_x}\to C_U\times_XZ$.
Let $B\subset C_U\times_XZ$ be the  closure of its image.
Then $g_C|_B:B\to C_U$ is finite and an isomorphism over $C_{U^0_x}$.
Thus  $g_C|_B:B\to C_U$ is an isomorphism at every point
where $C_U$ is smooth (or normal). 
In particular, $s$ restricts to a rational section
$s_W:D_W\rto C_U\times_XZ$. 

Repeating the previous argument, we see that $s_W$ is
an everywhere defined section, hence
$\phi|_{D_W}$ factors through $Z$, a contradiction.\qed

\begin{corollary} Let $X$  be 
a smooth projective variety over a perfect field $K$.
If there is a $\bar K$-family of free curves
$$
U_1\stackrel{\pi_1}{\leftarrow} U_1\times \Pone\stackrel{\phi_1}{\to} X
$$
satisfying the  Lefschetz condition then there is a
$K$-family of free curves
$$
U\stackrel{\pi}{\leftarrow} U\times \Pone\stackrel{\phi}{\to} X
$$
satisfying the  Lefschetz condition.
\end{corollary}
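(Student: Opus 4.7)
Since $K$ is perfect, the $\bar K$-family $U_1$ descends to a finite Galois extension $L/K$; let $G := \Gal(L/K)$ and let the Galois conjugate families be $\sigma(U_1)$ for $\sigma\in G$. Each $\sigma(U_1)$ is an $L$-family of free curves satisfying the Lefschetz condition. The plan is to combine the $|G|$ conjugates into a single $K$-family via the Galois-equivariant comb construction of (\ref{comb.say}) and to smooth the resulting combs using (\ref{m0mbar.thm}).

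Concretely, form the Weil restriction $V := \R_{L/K}(U_1)$, whose $\bar K$-points are $G$-tuples $(u_\sigma)_{\sigma\in G}$ with $u_\sigma \in \sigma(U_1)(\bar K)$, and which is geometrically irreducible because $V_{\bar K} = \prod_\sigma \sigma(U_1)_{\bar K}$ is a product of irreducibles. Over $V$, assemble a universal $K$-comb as follows: fix a Galois-invariant marked set $\{t_\sigma\}_{\sigma\in G}\subset \Pone_K$ (the orbit of a primitive element of $L/K$) as attachment points, attach the tooth $u_\sigma$ at $t_\sigma$ to a handle, and use the twisted $G$-action $\tau\cdot(u_\sigma)_\sigma = (\tau u_{\tau^{-1}\sigma})_\sigma$ to descend to $K$. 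For the handle one takes $\Pone_K$ contracted to a variable base point $y\in X$ which is incorporated into the parameter space, exactly as in the proof of (\ref{k-loc.cor}); for $|G|\ge 3$ the contracted handle is stable, and otherwise one adds the requisite marked points.

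By the freeness of the teeth, Exercise (\ref{vanish.lem}) yields $H^1(C, f^*T_X(-p)) = 0$ for a suitable marked point $p$ on the handle, so Theorem (\ref{m0mbar.thm}) identifies each comb $(C,p,f)$ as a smooth point of a unique component of $\overline{M}_{0,1}(X, p\mapsto y)$ whose dense open $\smooth$ parametrizes smooth free rational curves. Let $U \subset \Hom^{\rm free}(\Pone,X)$ be the $K$-family obtained by sweeping these smoothings as $v$ ranges over $V$ (and $y$ over $X$); the geometric irreducibility of $V$ forces $U$ to be geometrically irreducible.

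To verify the Lefschetz condition for $U$, I would apply Lemma (\ref{leff.smoothing.ok}) with the $\bar K$-subfamily $W\subset U_{\bar K}$ arising from smoothings in which all teeth except $u_{\mathrm{id}}\in U_1$ (together with $y$) are held at generic values. Since smoothing is locally an analytic isomorphism at a free stable map, $W$ is essentially a deformation of $U_1$ itself and inherits the Lefschetz condition from $U_1$. The main obstacle is the second hypothesis of (\ref{leff.smoothing.ok}), namely that $U_x$ is irreducible for general $x\in X$: one must check that the variable handle base point $y$ together with the irreducible tuple space $V$ produces a fiber $U_x$ with no extra components, which is the delicate local-analytic step of the whole construction and is where the freeness assumption on the curves in $U_1$ is genuinely used.
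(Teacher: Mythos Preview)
Your outline matches the paper's: pass to a finite Galois extension, take the conjugate families $U_1,\dots,U_m$, assemble combs, smooth via (\ref{m0mbar.thm}), and then invoke Lemma (\ref{leff.smoothing.ok}). However, two of your choices diverge from the paper's and leave the argument incomplete.

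\medskip
\textbf{The irreducibility of $U_x$.} You flag this as ``the main obstacle'' and leave it as a ``delicate local-analytic step''. In the paper it is not delicate at all, and the reason is hidden by your Weil-restriction packaging. The comb is built so that the handle is contracted to a point $y\in X$ and the marked point $0$ on the handle is sent to $y$; this forces each tooth $u_i\in U_i$ to satisfy $\phi_i(u_i,0)=y$. Hence the parameter space of combs is \emph{not} $V=\R_{L/K}(U_1)\cong\prod_i U_i$ but rather its fibre product over the diagonal of $X^m$. The payoff is immediate: for a fixed image point $x$ of the marked point, the comb locus fibre is (an open subset of)
\[
(\Pone)^m\times U_{1,x}\times\cdots\times U_{m,x},
\]
a product of irreducibles, so $\comb(U)_x$ is irreducible. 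Smoothness of the stable-map space at these combs (from (\ref{m0mbar.thm})) then gives irreducibility of ${\rm Smoothing}(U)_x$. Your formulation with $V$ as the parameter and $y$ ``incorporated'' separately obscures exactly the constraint that makes this computation work.

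\medskip
\textbf{The choice of $W$ in (\ref{leff.smoothing.ok}).} You propose to take $W$ inside the locus of \emph{smooth} curves, holding all teeth but one generic, and to argue that this ``is essentially a deformation of $U_1$''. The paper instead takes $W=\comb(U)$ itself (the locus of singular combs) and $D_W\subset C_W$ the \emph{first tooth}. Then $D_W\cong W\times\Pone$ and the map $D_W\to X$ is literally the composite $W\to U_1$ followed by $\phi_1:U_1\times\Pone\to X$, so the Lefschetz condition for $D_W$ is inherited from $U_1$ without any ``local-analytic'' comparison of smoothings to teeth. This is both cleaner and avoids the vagueness in your ``inherits the Lefschetz condition'' step.

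\medskip
In short: impose the common-base-point constraint on the teeth from the start (rather than parametrizing by the full Weil restriction), read off irreducibility of the $x$-fibre from the product decomposition, and apply (\ref{leff.smoothing.ok}) at the comb locus with $D_W$ the first tooth. With these adjustments your sketch becomes exactly the paper's proof.
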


Proof.  As usual, the first family is defined over a finite Galois extension;
let $U_1,\dots, U_m$ be its conjugates.

Consider the family of all $\bar K$-combs
$$
\comb(U):=\bigl\{\comb(\phi_{1,u_1}, \dots, \phi_{m,u_m})\bigr\}
$$
where $u_i\in U_i$ and 
$\phi_1(u_1,0)=\cdots=\phi_m(u_m,0)$
with $0$ a marked point on the handle. 
(We do not assume that the $u_i$ are
conjugates of each other.)
Each comb is defined by choosing $u_1,\dots, u_m$ as above
and $m$ distinct points in $\Pone\setminus\{0\}$. 

Thus $\comb(U)\subset \overline{M}_{0,1}(X)$ is defined over $K$.
Furthermore, for each $x\in X$, 
$\comb(U)_x\subset \overline{M}_{0,1}(X, 0\mapsto x)$ is
isomorphic to an open subset of 
$$
(\Pone)^m\times 
U_{1,x}\times \cdots \times U_{m,x},
$$
hence irreducible.

By (\ref{m0mbar.thm}), there is a unique irreducible component 
${\rm Smoothing}(U)\subset  \overline{M}_{0,1}(X)$ 
containing $\comb(U)$ and ${\rm Smoothing}(U)$ is defined
over $K$.

We can now apply (\ref{leff.smoothing.ok})
with $W:=\comb(U)$ and $D_W\to W$ the first tooth
of the corresponding comb. This shows that
${\rm Smoothing}(U)$ satisfies the Lefschetz condition.
\qed

\begin{example}[Cubic hypersurfaces]  
We have already seen in (\ref{leff.for.3surf})
how to get a family of rational curves on a  smooth cubic surface $S$ that
satisfies the Lefschetz condition:

For general $p\in S$, there are 72 one-parameter families of
twisted cubics
$C_1,\dots, C_{72}$ through $p$. Assemble these into a 1-pointed comb
and smooth them to get a family $U(S)$ of degree $216$ rational curves.
(In fact, the  
family  of degree $216$ rational curves on $S$ that are
linearly equivalent to $\OO_S(72)$ is irreducible,
and so equals $U(S)$,  but we do not need this.)

Let us go now to a higher dimensional cubic $X\subset \PP^{n+1}$.
Let $G$ denote the Grassmannian of $3$-dimensional
linear subspaces in $\PP^{n+1}$. Over $G$ we have 
${\mathbf S}\to G$, the
universal family of cubic surface sections of $X$.
For any fiber $S=L^3\cap X$ we can take
$U(S)$. These form a family of rational curves ${\mathbf U(\mathbf S)}$ on $X$
and we obtain
$$
{\mathbf U(\mathbf S)}\stackrel{\pi}{\leftarrow} 
{\mathbf U(\mathbf S)}\times \Pone 
\stackrel{\phi}{\to} X.
$$
We claim that it satisfies the Lefschetz condition.
Indeed, given $x,x'\in X$, the family of
curves  in ${\mathbf U(\mathbf S)}$ that pass through $x,x'$
equals
$$
{\mathbf U(\mathbf S)}_{x,x'}=
\bigcup_{x,x'\in L^3} \bigl(U(L^3\cap X)\bigr)_{x,x'}.
$$
The set of all such $L^3$-s is parametrized by
the Grassmannian of 
lines in $\PP^{n-1}$, hence geometrically irreducible.
The general  $L^3\cap X$ is a smooth cubic surface, hence
we already know that the corresponding
$U(L^3\cap X)_{x,x'}$ is irreducible.
 Thus
${\mathbf U(\mathbf S)}_{x,x'}$ is irreducible.
\end{example}

Although we did not use it for cubics, let us note
the following.

\begin{theorem}\cite{rcfg1, rcfg2}
 Let $X$ be a smooth, projective SRC variety over a  field $K$.
Then there is a family of rational curves
defined over $K$
$$
U\stackrel{\pi}{\leftarrow} U\times \Pone\stackrel{\phi}{\to} X
$$
that satisfies the  Lefschetz condition.
\end{theorem}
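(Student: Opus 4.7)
By the preceding Corollary, it suffices to construct, over $\bar K$, a family of free rational curves on $X_{\bar K}$ satisfying the Lefschetz condition; the comb-and-Galois construction of that Corollary will then descend it to $K$. So I work over $\bar K$ throughout.

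Since $X$ is SRC, choose a very free map $f_0 : \Pone \to X$ and let $U_0$ be the irreducible component of $\Hom(\Pone, X)$ containing $[f_0]$; on a dense open subset of $U_0$ all curves are very free. By very freeness the two-point evaluation
$$
\text{ev}_2 : U_0 \times \bigl((\Pone)^2 \setminus \Delta\bigr) \to X \times X, \qquad (g,p,p') \mapsto (g(p), g(p'))
$$
is smooth and dominant. Stein-factoring yields $\text{ev}_2 = h \circ c$ where $c$ has geometrically connected generic fibre and $h: Z \to X \times X$ is finite and generically \'etale; by Exercise~\ref{stein.exrc} the Lefschetz condition for $U_0$ is equivalent to $\deg h = 1$.

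The plan is to force $\deg h$ down to one by an iterated comb-and-smoothing procedure. Given any current family $U$ of free rational curves on $X$ with associated Stein cover $Z_U \to X \times X$ of degree $d \ge 2$, pick two general members $g_1, g_2 \in U$ meeting at a general point $y \in X$, form the 1-pointed comb $\comb(g_1, g_2)$ at $y$, and smooth it via Theorem~\ref{m0mbar.thm} to obtain a new family $U'$ of very free rational curves; by Lemma~\ref{leff.smoothing.ok}, $U'$ lies in a single irreducible component of $\overline{M}_{0,1}(X)$. The crucial point is that the comb is symmetric under swapping the two teeth together with the roles of their free ends: consequently, for a general target pair $(x', x'') \in X \times X$, the smoothed family $U'$ realises both configurations $(g_1(\infty), g_2(\infty)) = (x', x'')$ and $(g_2(\infty), g_1(\infty)) = (x', x'')$ as points of the \emph{same} irreducible family, which forces the new Stein cover to identify the two sheets of $Z_U$ swapped by the corresponding monodromy transposition. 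Iterating with general choices reduces $\deg h$ strictly at each step, so after finitely many steps the Lefschetz condition holds.

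The main obstacle is the symmetry-to-identification step: one must verify that interchanging the two teeth of a comb produces a deformation inside the \emph{same} irreducible component of the smoothing family, rather than inside a distinct conjugate component which would preserve the monodromy. The robust way, carried out in detail in \cite{rcfg1, rcfg2}, is to work from the outset with combs having $m \gg 0$ independently chosen teeth from $U_0$; the parameter space $X \times \operatorname{Sym}^m U_0$ of (base point, multiset of teeth) is manifestly irreducible, and the abundance of very free curves through a common general point (guaranteed by SRC) ensures that the smoothed family is large enough that its 2-pointed evaluation dominates $X \times X$ with geometrically connected generic fibre. Once this irreducibility is established over $\bar K$, the preceding Corollary produces the required family over $K$.
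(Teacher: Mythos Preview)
The paper does not prove this theorem; it is stated with a citation to \cite{rcfg1, rcfg2} and no argument is given. So there is no ``paper's own proof'' to compare against, and the question is whether your sketch stands on its own.

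Your overall architecture is correct: reduce to $\bar K$ via the preceding Corollary, set up the Stein factorisation of the two-point evaluation, and try to force its degree down to $1$ by comb-and-smoothing. That is indeed the shape of the argument in the cited references.

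However, the ``symmetry-to-identification'' step, which you yourself flag as the main obstacle, is not merely delicate---it is wrong as stated. Swapping the two teeth of the comb sends the marked free ends from $(x',x'')$ to $(x'',x')$; it exhibits the obvious $\ZZ/2$-equivariance of the Stein cover over the involution of $X\times X$, not an identification of two sheets over the \emph{same} point $(x',x'')$. So nothing in that paragraph reduces $\deg h$. There is also no reason the degree should drop by exactly one per iteration: the monodromy group of $Z_U\to X\times X$ is some transitive subgroup of $S_d$, and you would need to produce specific loops in the base, realised by degenerations in the curve family, that generate the full group. Your two-tooth construction does not do this.

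Your final paragraph is closer to the truth: the actual proofs in \cite{rcfg1, rcfg2} work with combs having many teeth and exploit the irreducibility of the (large) parameter space of such combs together with the flexibility of very free curves to connect any two points of $U'_{x,x'}$ through the boundary. But you have not carried this out---you have only pointed at it. As written, this is an outline that correctly identifies where the difficulty lies and then defers it to the literature, rather than a proof.
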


\begin{say}[Going from 2 points to many points]\label{2-to-mpts.say}

It turns out that going from curves passing through
2 general points to curves passing through $m$ arbitrary
points does not require  new ideas.

Let us see first how to find a curve through 2 arbitrary points
$x, x'\in X$. 

We have seen in Section \ref{rcpar.sec} how to produce
very free curves in $\Hom(\Pone, X, 0\mapsto x)$
and in $\Hom(\Pone, X, 0\mapsto x')$. If $m\gg 1$
then we can find $\psi\in \Hom(\Pone, X, 0\mapsto x)$
and  $\psi'\in \Hom(\Pone, X, 0\mapsto x')$
such that (\ref{2pt.curve.leff.thm}) produces a
rational curve $\phi:\Pone\to X$ passing through
$\psi(\infty)$ and $\psi'(\infty)$. 

We can view this as a length 3 chain 
$$
(\psi,\phi,\psi'): \Pone\vee\Pone\vee\Pone \to X
$$
through $x,x'$.
Using (\ref{m0mbar.thm}), we get a family of free 
rational curves through $x,x'$ and, again for $m\gg 1$
a single free curve through $x,x'$.

How to go from 2 points to $m$ points $x_1,\dots, x_m$?
For each $i>1$ we already have very free curves a
$g_i:\Pone\to X$ such that $g_i(0)=x_1$ and $g_i(\infty)=x_i$.
We can assemble a comb with $(m-1)$ teeth
$\phi: \comb(g_2,\dots,g_m)\to X$.

By (\ref{m0mbar.thm}), we can smooth it in
 $$
\overline{M}_{0,m}(X, p_1\mapsto x_1, \cdots, p_m\mapsto x_m)
$$
to get such rational curves.
\end{say} 

\subsection*{Appendix. The Lefschetz condition and fundamental groups}{\ }

The classical Lefschetz theorem says that if $X$ is a smooth, projective 
variety over $\CC$ and
$j:C\into X$ is a smooth curve obtained by intersecting
$X$ with hypersurfaces, then the natural map
$$
j_*:\pi_1(C)\to \pi_1(X)\quad\mbox{is onto.}
$$
Later this was extended to $X$ quasi-projective. Here $j_*$
need not be onto for every  curve section $C$, 
 but $j_*$ is onto  for  general curve sections.
In particular we get the following. (See \cite{gor-mac}
for a general discussion and further results.)

\begin{theorem}\label{qp.leff.class} Let $X^n$ be a smooth, projective 
variety over $\CC$ and $|H|$ a very ample linear system.
Then, for every open subset $X^0\subset X$
and general $H_1,\dots, H_{n-1}\in |H|$,
$$
\pi_1\bigl(X^0\cap H_1\cap\dots\cap H_{n-1}\bigr)\to 
\pi_1(X)\quad\mbox{is onto.}
$$
It should be stressed that the notion of ``general'' depends on $X^0$.
\end{theorem}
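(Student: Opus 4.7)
\emph{Plan.} The plan is to reduce the quasi-projective statement to the classical projective Lefschetz theorem on fundamental groups, by cutting down to a general complete-intersection curve $C$ inside $X$, and then passing from $C$ to $X^0\cap C$ by a purely topological argument: removing finitely many points from a smooth complex curve preserves $\pi_1$-surjectivity.

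\emph{Steps.} Set $Z:=X\setminus X^0$, a closed subvariety of $X$ of dimension $\leq n-1$. First, by iterated Bertini applied to $X$ and to each of its successive smooth hyperplane sections, a general choice of $H_1,\dots,H_{n-1}\in |H|$ yields a smooth irreducible projective curve $C:=H_1\cap\cdots\cap H_{n-1}$. By additionally requiring that each $H_i$ meets $Z$, and each irreducible component of $Z$, properly, we may also arrange that $C\cap Z$ is a finite (possibly empty) set in $C$. Both requirements are nonempty open conditions on $|H|^{n-1}$, and this is the only place where ``general'' is forced to depend on $X^0$. Second, invoke the classical Lefschetz hyperplane theorem on $\pi_1$ for smooth projective varieties iteratively along the flag
$$
X\ \supset\ H_1\ \supset\ H_1\cap H_2\ \supset\ \cdots\ \supset\ C;
$$
each step gives a surjection on $\pi_1$, so we obtain a surjection $\pi_1(C)\twoheadrightarrow \pi_1(X)$. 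Third, $C$ is a smooth complex curve, i.e.\ a real surface, and $C\cap Z$ is a finite set, so $X^0\cap C=C\setminus(C\cap Z)$; any continuous loop in $C$ based at a point of $X^0\cap C$ can be perturbed by a small homotopy to avoid the finitely many points of $C\cap Z$, hence the inclusion $X^0\cap C\hookrightarrow C$ induces a surjection $\pi_1(X^0\cap C)\twoheadrightarrow \pi_1(C)$. Composing this with the Lefschetz surjection produces the desired surjection $\pi_1(X^0\cap H_1\cap\cdots\cap H_{n-1})\twoheadrightarrow \pi_1(X)$.

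\emph{Main obstacle.} There is no serious analytic or topological obstruction; the only thing requiring care is that one must simultaneously arrange (i) smoothness and connectedness of every intermediate iterated intersection (Bertini, needed to apply classical Lefschetz at each stage), (ii) finiteness of $C\cap Z$ (so that the last step does not kill $\pi_1$-surjectivity), and (iii) the genericity hypotheses underlying each application of Lefschetz. Each of these is an open dense condition on $|H|^{n-1}$, so a common general choice exists; condition (ii) is precisely what makes the admissible open set of $(H_1,\dots,H_{n-1})$ depend on $X^0$. The degenerate case $n=1$ is covered by Step~3 alone, applied with $C=X$.
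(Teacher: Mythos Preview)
The paper does not actually prove this theorem: it is stated as a classical result, with a reference to Goresky--MacPherson, immediately after recalling the projective Lefschetz theorem and noting that it extends to the quasi-projective setting. The argument that follows in the Appendix is not a proof of the theorem; rather, it shows that any family of curves satisfying the abstract Lefschetz condition of Definition~\ref{leff.cond.defn} enjoys the same $\pi_1$-surjectivity property.

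Your argument is correct and is precisely the standard way one deduces the quasi-projective statement from the projective one. The three ingredients --- iterated Bertini to guarantee smoothness and irreducibility of each intermediate section, the classical Lefschetz hyperplane theorem for $\pi_1$ applied along the flag, and the elementary fact that the inclusion of a punctured Riemann surface into the compact surface is $\pi_1$-surjective --- combine exactly as you describe. Your observation that finiteness of $C\cap Z$ is the one condition that depends on $X^0$ is also exactly what the paper's remark ``the notion of `general' depends on $X^0$'' is pointing at. So your write-up supplies the proof the paper chose to cite rather than include.
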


If $X$ is a  hypersurface of degree $\geq 3$ then
 the genus of the curves $H_1\cap\dots\cap H_{n-1}$ is at least 1.
We would like to get a similar result where
$\{H_1\cap\dots\cap H_{n-1}\}$ is replaced by some family of rational curves.

 The following argument shows that 
if a family of curves satisfies the  Lefschetz condition, then
 (\ref{qp.leff.class}) also holds for that family. 

Pick a family of curves
$U\stackrel{\pi}{\leftarrow} C_U\stackrel{\phi}{\to} X$
with a section $s:U\to C_U$ that satisfies the  Lefschetz condition.

Given a generically \'etale 
$g:Z\to X$, there is an open $X^0\subset X$ such that
$Z^0:=g^{-1}(X^0)\to X^0$ is finite and \'etale.

Pick a general point $p\in X^0$. There is an open subset
$U_p^0\subset U_p$ such that $\phi_p^{-1}(X^0)\to U_p$
is topologically a locally trivial fiber bundle over $C^0_{U_p}\to U_p^0$
with typical fiber $C^0_u=C_u\cap \phi^{-1}(X^0)$
where $u\in U$ is a general point.

Thus there is a right split  exact sequence
$$
\pi_1(C^0_u)\to \pi_1(C^0_{U_p})\leftrightarrows \pi_1(U_p^0)\to 1,
$$
where the splitting is given by the section $s$.
Since $s(U_p)$  gets mapped to the point $p$ by $\phi$,
 $\pi_1(C^0_{U_p})$ gets killed in
$\pi_1(X^0)$. Hence
$$
{\rm im} [\pi_1(C^0_u)\to \pi_1(X^0)]=
{\rm im}[\pi_1(C^0_{U_p})\to \pi_1(X^0)].
$$
Since $C_{U_p}\to X$ is dominant, 
${\rm im}[\pi_1(C^0_{U_p})\to \pi_1(X^0)]$ has finite index in
$\pi_1(X^0)$. We are done if the image is $\pi_1(X^0)$.
Otherwise the image corresponds to a nontrivial
 covering $Z^0\to X^0$  and 
$\phi_p$ factors through $Z^0$.
This, however, contradicts the  Lefschetz condition.\qed
\medskip

A more detailed consideration of the above argument shows
that (\ref{qp.leff.class}) is equivalent to the following
weaker Lefschetz-type conditions:
\begin{enumerate}
\item The generic fiber of $C_U\to X$ is geometrically irreducible, and
\item for general $x\in X$, $U_x$ is geometrically irreducible  and
$C_{U_x}\to X$ is dominant.
\end{enumerate}

In positive characteristic the
above argument has a problem with the claim that
something is ``topologically a locally trivial fiber bundle''
and indeed the two versions are not quite equivalent.
In any case, the purely algebraic version of
(\ref{leff.cond.defn}) works better for us.

\section{Descending from $\FF_{q^2}$ to  $\FF_q$}\label{desc.sect}

Our methods so far constructed rational curves on 
hypersurfaces over $\FF_q$ for $q\gg 1$. Even for
cubics, the resulting bounds on $q$ are huge.
The aim of this section is to use the
third intersection point map to construct
rational curves on cubic hypersurfaces over $\FF_q$
from rational curves on cubic hypersurfaces over $\FF_{q^2}$.
The end result is a proof of (\ref{cubic.ext.thm}). 
The price we pay is that the degrees of the rational curves
become larger as $q$ gets smaller.

\begin{say}[Descent method]\label{desc.meth.say}
  Let $X$ be a cubic hypersurface, $C$ a smooth curve 
and $\phi:C(\FF_q)\to X(\FF_q)$ a map of sets.

Assume that for each $p\in C(\FF_q)$ there
is a line $L_p$ through $\phi(p)$
which intersects $X$ in two further points
$s(p), s'(p)$. These points are in $\FF_{q^2}$ and
we assume that none of them is in $\FF_q$, hence 
$s(p), s'(p)$ are conjugate over $\FF_q$. 
This gives a lifting of $\phi$
to $\phi_2:C(\FF_q)\to X(\FF_{q^2})$ where
$\phi_2(p)=s(p)$. (This involves a choice for each $p$ but this
does not matter.)

Assume that over $\FF_{q^2}$ there is an extension of $\phi_2$
to $\Phi_2:C\to X$. If $\bar\Phi_2$ denotes the conjugate map,
then $\bar \Phi_2(p)=s'(p)$.

Applying the third intersection point map 
(\ref{tipm.say})  to
the Weil restriction (\ref{ex:weil_restriction}) we get an $\FF_q$-map
$$
h:  \R_{\FF_{q^2}/\FF_q}C\to X.
$$
Since $C$ is defined over $\FF_q$, the Weil restriction
has a diagonal
$$
j:C\into \R_{\FF_{q^2}/\FF_q}C
$$
and $\Phi:=h\circ j: C\to X$  is the required lifting
of $\phi$.

Thus, in order to prove (\ref{cubic.ext.thm}), we need to show that
\begin{enumerate}
\item (\ref{cubic.ext.thm}) holds   for $q\gg 1$, and
\item for every $x\in X(\FF_q)$ there is a line $L$ as required.
\end{enumerate}
\end{say}

\begin{remark} In trying to use the above method over
an arbitrary field $K$, a significant problem is
that for  each point $p$ we get a degree 2 field extension
$K(s(p))/K$ but we can  use these only if they are
all the same. A finite field has a unique extension of any given
degree, hence the extensions $K(s(p))/K$ are automatically the same.

There are a few other fields with 
a unique degree 2 extension,
for instance  $\RR, \overline{\QQ}((t))$
or $\overline{\FF}_p((t))$ for $p\neq 2$.

If we have only 1 point $p$, then the method works
over any field $K$. This is another illustration
that the 1 point case is much easier.

In the finite field case, the method can also deal with
odd degree points of $C$ but not with even degree points.
\end{remark}

\begin{say}[Proof of (\ref{desc.meth.say}.1)]

We could just refer to (\ref{2-to-mpts.say}) or to 
\cite[Thm.2]{k-sz}, but I rather explain how to
prove the 2 point case 
using (\ref{2pt.curve.leff.thm})  and the above descent method.

Fix $c,c'\in C(\FF_q)$.
By (\ref{2pt.curve.leff.thm}), there is an open subset
$Y^0\subset X\times X$ such that the following holds

\begin{enumerate}
\item[($*$)]  If $\FF_{q^m}\supset \FF_q$ is large enough
then for every $(x,x')\in Y^0(\FF_{q^m})$ there is
 an $\FF_{q^m}$-map  $\Psi:C\to X$ such that
$\Psi(c)=x$ and $\Psi(c')=x'$.
\end{enumerate}

Assume now that we have
any  $x,x'\in  X(\FF_{q^m})$.
If we can choose the lines $L$ through $x$ and $L'$ through $x'$
such that 
$(s(x), s(x'))\in Y^0$, then the descent method produces
the required extension  $\Psi:C\to X$ over $\FF_{q^m}$.

By the Lang-Weil estimates, $Y^0(\FF_{q^m})$ has about $q^{2nm}$ points.
If, for a line $L$ through $x$, one of the other 
two points of $X\cap L$ is in $\FF_{q^m}$ then so is the other point.
Thus we have about $\frac12 q^{nm}$ lines where
$s(x), s'(x)$ are in $X(\FF_{q^m})$. Accounting for the lines tangent to
$X$ gives a contribution $O(q^{(n-1)m})$. Thus about $\frac14$
of all line pairs $(L,L')$ work for us.
\end{say}
\medskip

The proof of (\ref{desc.meth.say}.2) is an elaboration of
the above line and point counting argument.

\begin{lemma}\label{lem:cubic_conjugate_points}
Let $X\subset \PP^{n+1}$ be a normal cubic hypersurface and
$p \in X(\FF_q)$ a smooth point. Assume that $n\geq 1$ and $q\geq 8$.
Then 
\begin{enumerate}
\item either there is a line defined over
  $\FF_q$  through $p$ but not contained in $X$ that intersects $X$ in two 
further smooth points $s, s'
  \in X(\FF_{q^2}) \setminus X(\FF_q)$,
\item or projecting $X$ from $p$ gives an inseparable degree 2 map
$X\rto \PP^n$. In this case $q=2^m$ and $X$ is singular.
\end{enumerate}
\end{lemma}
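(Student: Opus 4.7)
The plan is to parameterize lines through $p$ by their direction $m\in\PP^n$ and count those whose two residual intersections with $X$ form an $\FF_q$-conjugate pair in $\FF_{q^2}\setminus\FF_q$ missing $X_{\mathrm{sing}}$; the main work is to show this count is positive for $q\ge 8$. Set up affine coordinates with $p$ at the origin, so $X$ is cut out by $L(x)+Q(x)+C(x)=0$ with $L,Q,C$ homogeneous of degrees $1,2,3$. Smoothness of $p$ forces $L\not\equiv 0$, and irreducibility of $X$ forces $C\not\equiv 0$. The line in direction $m$ meets $X$ at $p$ and at the two roots of
$$
C(m)\,t^2+Q(m)\,t+L(m)=0.
$$
We seek $m\in\PP^n(\FF_q)$ such that these roots are conjugate in $\FF_{q^2}\setminus\FF_q$, with $L(m)C(m)\ne 0$, and with the residual points smooth on $X$.

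Treat first the separable case --- either $\mathrm{char}\,K\ne 2$, or $\mathrm{char}\,K=2$ with $Q\not\equiv 0$. The condition on $m$ translates into a character condition: in characteristic $\ne 2$, the discriminant $D(m)=Q(m)^2-4L(m)C(m)$ should be a nonzero nonsquare in $\FF_q$; in characteristic $2$, the Artin--Schreier residue $L(m)C(m)/Q(m)^2$ should have trace $1$ to $\FF_2$. Because the projection $\pi:X\rto\PP^n$ from $p$ is a geometrically irreducible, generically \'etale degree-$2$ cover, $D$ is not a polynomial square (resp.\ $LC/Q^2$ is not of the form $w^2+w$ for a rational function $w$) --- either degeneracy would split $\pi$ over the function field and contradict irreducibility of $X$. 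Weil's bounds on the associated quadratic or additive character sum then give
$$
\#\{\text{good }m\in\PP^n(\FF_q)\}=\tfrac12\,|\PP^n(\FF_q)|+O(q^{n-1/2}),
$$
while the ``bad'' $m$ one must discard (those with $L(m)C(m)=0$, or a residual point in $X_{\mathrm{sing}}$) number only $O(q^{n-1})$, using $\dim X_{\mathrm{sing}}\le n-2$. For $q\ge 8$ this is positive for all $n\ge 2$; the boundary case $n=1$ is a smooth cubic curve, where the Hasse bound together with the group law on $X$ yields the count directly.

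The remaining case is $\mathrm{char}\,K=2$ with $Q\equiv 0$, where the quadratic degenerates to $C(m)t^2+L(m)=0$ and $\pi$ is purely inseparable of degree $2$. This is conclusion (2) of the lemma, and forces $q=2^r$. It remains to see $X$ is singular. Homogenize the equation to $F=x_0^2L+C$ in $\PP^{n+1}$; in characteristic $2$, $\partial F/\partial x_0=2x_0L\equiv 0$, so the singular locus of $X$ is cut out by the $n+1$ equations $\partial F/\partial x_i=0$ (for $i\ge 1$), which are quadratic forms on $\PP^{n+1}$ and admit a common zero by Bezout. Euler's identity $\sum_i x_i\,\partial_i F=3F=F$ (valid in characteristic $2$) places any such common zero automatically on $X$, producing the required singular point. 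The main obstacle throughout is quantitative: pushing the Weil error term below the main term uniformly down to $q=8$, which is tightest when $n=1$ and one must fall back on Hasse.
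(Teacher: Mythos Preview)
Your overall strategy --- count directions $m\in\PP^n(\FF_q)$ for which the discriminant $D=Q^2-4LC$ is a nonsquare (resp.\ $LC/Q^2$ has trace $1$) via a quadratic (resp.\ Artin--Schreier) character sum --- is sound asymptotically but differs from the paper's route, and the difference matters precisely at the effectivity step you yourself flag as ``the main obstacle.''

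The paper never estimates a character sum over $\PP^n$. Instead it first locates \emph{some} $\FF_q$-line through $p$ with two distinct smooth residual points, merely by choosing a direction where the degree-$\le 5$ form $L\cdot(Q^2-4LC)$ (or $L\cdot Q$ in characteristic $2$) is nonzero --- elementary for $q\ge 5$. If those residuals happen to lie in $\FF_q$, the paper then varies $2$-planes through that fixed secant line, uses a degree-$\le 6$ non-vanishing condition to pick a plane whose cubic section contains no line missing $p$, and applies the $n=1$ Hasse--Weil count to that single plane cubic. So every constant in the paper comes from non-vanishing of an explicit form of degree $\le 6$ on affine space, together with Hasse--Weil for a curve; no higher-dimensional Weil input is used.

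By contrast, your estimate $\#\{\text{good }m\}=\tfrac12|\PP^n(\FF_q)|+O(q^{n-1/2})$ needs an explicit implied constant, and at $q=8$ that constant must fall below roughly $\sqrt 8/2\approx 1.4$. You do not produce it, and there is no cheap way to: the Lang--Weil or Deligne constant for the double cover branched along $\{D=0\}$ is governed by the Betti numbers of a degree-$4$ hypersurface in $\PP^n$, which grow with $n$. This is a genuine gap, not merely omitted bookkeeping, and the paper's plane-section reduction is exactly the device that circumvents it.

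Your treatment of the inseparable case is correct and clean: with $Q\equiv 0$ in characteristic $2$ one has $\partial_{x_0}F\equiv 0$, so $\operatorname{Sing}X$ is the common zero of $n+1$ quadrics in $\PP^{n+1}$ (nonempty by dimension), and Euler's relation $\sum x_i\,\partial_iF=3F=F$ forces any such point onto $X$. The paper asserts this singularity in the lemma statement but does not spell out the argument in its proof, so here you have in fact added something.
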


\begin{proof} Start with the case $n=1$. Thus $C:=X$ is plane cubic
which we  allow to be reducible.

Consider first the case when 
 $C=L\cup Q$ a line through $p$ and a smooth conic $Q$.
There are $q+1$ $\FF_q$-lines through $p$, one is $L$ and at most 
2 of them are tangent
to $Q$, unless projecting $Q$ from $p$ is purely inseparable. 
If all the remaining $q-2$ lines intersect $Q$ in
two $\FF_q$-points, then $Q$ has $2+2(q-2)=2q-2$ points in
$\FF_q$. This is impossible for $q>3$.
In all other reducible cases, $C$ contains a line not passing through $p$.
(Since $C$ is smooth at $p$, $C$ can not consist of 3 lines 
passing through $p$.)

Assume next that $C$ is irreducible and smooth. 
If projection from $p$ is separable, then at most 4 lines through $p$
are tangent to $C$ away from $p$ and one is tangent at $p$. 
If all the remaining $q-4$ lines intersect $C$ in
two $\FF_q$-points, then $C$ has $5+2(q-4)=2q-3$ points in
$\FF_q$. For $q\geq 8$ this contradicts the Hasse-Weil estimate
$\#C(\FF_q)\leq q+1+2\sqrt{q}$.
The singular case works out even better.

Now to the general case. Assume that in affine coordinates $p$ is the origin
and write the equation as
$$
L(x_1,\dots,x_{n+1})+Q(x_1,\dots,x_{n+1})+C(x_1,\dots,x_{n+1})=0.
$$
Let us show first that there is a line defined over
  $\FF_q$  through $p$ but not contained in $X$ that intersects $X$ in two 
further smooth points $s, s'$.

If the characteristic is $2$, then projection from $p$ is inseparable
iff $Q\equiv 0$. If $Q$ is not identically zero, then
for $q\geq 3$ there are $a_1,\dots,a_{n+1}\in \FF_q$
such that $(L\cdot Q)(a_1,\dots,a_{n+1})\neq 0$. The corresponding line
intersects $X$ in 2 further distinct points, both necessarily smooth.

If the characteristic is $\neq 2$, then the line
corresponding to $a_1,\dots,a_{n+1}\in \FF_q$ has a double
intersection iff the discriminant $Q^2-4LC$ vanishes.
Note that $Q^2-4LC$ vanishes identically only if $X$ is reducible.
Thus, for $q\geq 5$ there are $a_1,\dots,a_{n+1}\in \FF_q$
such that $(L\cdot (Q^2-4LC))(a_1,\dots,a_{n+1})\neq 0$. 
As before, the corresponding line
intersects $X$ in 2 further distinct points, both necessarily smooth.

It is possible that for this line  $s, s'
  \in X(\FF_{q^2}) \setminus X(\FF_q)$ and we are done.
If not then  $s, s' \in  X(\FF_q)$. We can choose
the line to be $(x_1=\cdots=x_n=0)$ and write 
$s=(0,\dots,0,s_{n+1})$ and $s'=(0,\dots,0, s'_{n+1})$.
 Our aim now is to
intersect $X$ with the planes
$$
P(a_1,\dots,a_{n}):=\langle (0,\dots,0,1), (a_1,\dots,a_{n},0)\rangle
$$
 for various $a_1,\dots,a_{n}\in \FF_q$
and show that for one of them the intersection does not contain
 a line not passing through $p$. Then the curve case discussed above
finishes the proof.

Set $x'_{n+1}=x_{n+1}-s_{n+1}$. At $s$ the equation of $X$ is
$$
L_s(x_1,\dots,x'_{n+1})+Q_s(x_1,\dots,x'_{n+1})+
C_s(x_1,\dots,x'_{n+1})=0.
$$
Since $X$ is irreducible, $L_s$ does not divide either $Q_s$ or $C_s$.
$L_s$ contains $x'_{n+1}$ with nonzero coefficient
since the vertical line has intersection number 1 with $X$.
We can use $L_s$ to eliminate $x'_{n+1}$ from $Q_s$ and $C_s$.
As we saw, one of these is nonzero, let it be
$B_s(x_1,\dots,x_n)$. 
Similarly, at $s'$ we get $B'_s(x_1,\dots,x_n)$.

If $X\cap P(a_1,\dots,a_{n})$ contains a line through $s$ (resp.\ $s'$)
then $B_s(a_1,\dots,a_{n})=0$ (resp.\ $B'_s(a_1,\dots,a_{n})=0$).
Thus we have the required $(a_1,\dots,a_{n})$, unless
$B_s\cdot B'_s$ is identically zero on
$\PP^{n-1}(\FF_q)$. This happens only for $q\leq 5$.
\end{proof}

\begin{exercise} Let $H(x_1,\dots, x_n)$
be a homogeneous polynomial of degree $d$.
If $H$ vanishes on $\FF_q^n$ and $q\geq d$
then $H$ is identically zero.
\end{exercise}

\begin{exercise} Set $F(x_0,\dots,x_m)=\sum_{i\neq j} x_i^{2^n}x_j$.
Show that $F$ vanishes on $\PP^m(\FF_{2^n})$ and  for $m$ odd
it defines a smooth  hypersurface.
\end{exercise}

\bibliographystyle{alpha}
\bibliography{kollar}

\vskip1cm

\noindent Princeton University, Princeton NJ 08544-1000

\begin{verbatim}kollar@math.princeton.edu\end{verbatim}

\end{document}